\newtheorem{thm}{Theorem}[section]
\newtheorem{lem}[thm]{Lemma}
\newtheorem{prop}[thm]{Proposition}
\theoremstyle{definition}
\theoremstyle{remark}
\newtheorem{rem}[thm]{Remark}
\numberwithin{equation}{section}
\newcommand{\Real}{\mathbb R}
\newcommand{\Z}{\mathbb Z}
\newcommand{\eps}{\varepsilon}
\newcommand{\F}{\mathcal{F}}
\newcommand{\one}[1]{\mathbf{1}_{\{#1\}}}
\renewcommand{\P}{\mathbb{P}}
\newcommand{\E}{\mathbb{E}}
\DeclareMathOperator*{\var}{var}
\DeclareMathOperator*{\cov}{cov}
\newcommand{\iunit}{\mathbf{i}}
\newcommand{\bxi}{\boldsymbol\xi}
\newcommand{\bzeta}{\boldsymbol\zeta}
\newcommand{\bgamma}{\boldsymbol\gamma}
\definecolor{LightGray}{rgb}{0.9,0.9,0.9}
\begin{document}

\title[]{Estimation in threshold autoregressive models with correlated innovations}%
\author{P. Chigansky*}%
\thanks{* the corresponding author}
\address{Department of Statistics,
The Hebrew University,
Mount Scopus, Jerusalem 91905,
Israel}
\email{pchiga@mscc.huji.ac.il}

\author{Yu. A. Kutoyants}%
\address{Laboratoire de Statistique et Processus,
Universite du Maine,
France}
\email{kutoyants@univ-lemans.fr}

\keywords{asymptotic analysis, Bayes estimator, correlated noise, hidden Markov models}%

\begin{abstract}
Large sample statistical analysis of threshold autoregressive (TAR) models is usually based on the assumption
that the underlying driving noise is uncorrelated. In this paper, we consider a model, driven by Gaussian  noise with
geometric correlation tail and derive a complete characterization of the asymptotic distribution for the Bayes estimator of the
threshold parameter.
\end{abstract}
\maketitle

\section{Introduction: the setting and the main result}

Let $(X_j)_{j\in \Z_+}$ be the sequence generated by the recursion
\begin{equation}
\label{AR}
X_j = \Big(\rho^+ \one{X_{j-1}\ge \theta} + \rho^- \one{X_{j-1} <\theta}\Big)X_{j-1} + \epsilon_j,\quad j\ge 1
\end{equation}
where $(\epsilon_j)_{j\in \Z_+}$ is a random process
with known distribution,  $\rho^+$ and $\rho^-$ are known constants  and $\theta$ is the unknown
threshold parameter to be estimated from the sample $X^n:=(X_1,...,X_n)$. The equation \eqref{AR} is a basic instance of the threshold autoregression (TAR) models,
which play a considerable role in the theory and practice of time series. This type of models have been studied by
statisticians for already more than three decades, producing  interesting theory and finding many important applications,
some of which can be traced in the early and more recent surveys \cite{Tong83} and \cite{Tong11}, \cite{Tsay89}, \cite{H11}, \cite{ChK10}, \cite{Kut10} (see also, e.g., \cite{LT05}, \cite{LTL07} for the analysis of the related  moving average threshold (TMA) models).

When it comes to the large sample asymptotic analysis of the estimators, the standard conditions imposed on the models such as \eqref{AR}
are

\medskip

\begin{enumerate}\addtolength{\itemsep}{0.5\baselineskip}
\renewcommand{\theenumi}{\roman{enumi}}
\item\label{ci} strong ergodicity of the observed process $(X_j)$

\item\label{cii} independence of the driving random variables $\epsilon_j$'s
\end{enumerate}

\medskip

Departure from these assumptions poses challenging problems. For the model \eqref{AR}, the condition \eqref{ci} fails if the absolute value of either $\rho^+$ or $\rho^-$
is greater or equal to 1. If the process $(X_j)$ is null recurrent (e.g. $\rho_+=1$ and $|\rho_-|<1$), characterization of the exact large sample asymptotic distribution of the likelihood based estimators of the threshold parameter $\theta$ remains an open problem (see Remark \eqref{rem:1.2} below). If $\theta\ne 0$ is assumed to be known, the asymptotic distribution of the coefficients' estimators in the non-ergodic case has been studied in \cite{PCT91}, \cite{MH01}, \cite{LLS11}.

TAR models beyond  the independence assumption \eqref{cii} of the driving noise sequence has not yet been addressed.
As we shall shortly see, in the dependent case the problem falls into the framework of
statistical inference of hidden Markov models (HMM), where the driving noise plays the role of the hidden signal
(see Ch. 10-12 in \cite{CMR05} and the references therein).
However, most of the HMM literature deals with  locally asymptotically normal (LAN) experiments and,
to the best of our knowledge, non-LAN models with partial observations
have not yet been studied systematically.

In this paper, we consider the model \eqref{AR} in which $(\epsilon_j)$ is a sequence with geometrically decaying
correlation. More precisely, let $X=(X_j)_{j\in \Z_+}$ be generated by the recursion
\begin{equation}
\label{ar}
X_j = \Big(\rho^+ \one{X_{j-1}\ge \theta} + \rho^- \one{X_{j-1} <\theta}\Big)X_{j-1} + \xi_{j-1} + \eps_j,
\end{equation}
subject to $X_0\sim N(0,1)$, where $\rho:=|\rho^+|\vee |\rho^-|<1$ and the unknown parameter $\theta$ takes values in an open bounded
subset $\Theta\subset \Real$.
We shall consider the problem with discontinuous drift function $f(x, \theta):= \big(\rho^+  \one{x\ge \theta} +\rho^- \one{x<\theta}\big)x$, and thus
assume $\rho^+\ne \rho^-$ and  $0\not\in \Theta$.
The driving noises $(\eps_j)_{j\in \Z_+}$ and $(\xi_j)_{j\in  \Z_+}$ are assumed  independent:
the {\em white noise} component $(\eps_j)$ is a sequence of i.i.d. $N(0,1)$ random variables and
the {\em colored noise}  $(\xi_j)$ is the Gaussian AR(1) process, generated by the linear recursion
\begin{equation}
\label{xi}
\xi_j = a \xi_{j-1} + \zeta_j, \quad j\ge 1,
\end{equation}
where   $(\zeta_j)_{j\in \Z_+}$ are i.i.d. $N(0,1)$ random variables and $a$ is a known constant
$|a|<1$, controlling the bandwidth of the noise.

All the aforementioned random variables are defined on a measurable space $\big(\Omega, \F\big)$,
with the family of   probabilities $\big(\P_\theta\big)_{\theta\in \Theta}$, indexed by the unknown parameter.
For integers $k\ge m$, we define $\F_{k,m}:=\sigma\{\zeta_i,\eps_i\; k\le i\le m\}$ and set $\F_m:=\F_{0,m}$ and
$\F_{k,\infty}:=\bigvee_{i\ge k}\F_{k,i}$. All the processes in our problem are adapted to the filtration $(\F_j)_{j\in \Z_+}$
and we shall assume that $\F=\F_{0,\infty}$.
Finally we define the observed filtration $\F^X_j:= \sigma\{X_i,\; i\le j\}\subset \F_j$.

The recursions \eqref{ar} and \eqref{xi} form a conditionally Gaussian system, which means that the conditional law of
$\xi_n$ given $X^n$ is Gaussian, and by Theorem 13.5 in \cite{LiSh2}
\begin{equation}
\label{inno}
X_j = f(X_{j-1}, \theta) + \widehat \xi_{j-1}(\theta) + \sqrt{1+\gamma_{j-1}}\widehat \eps_j,
\end{equation}
where
$$
\widehat \eps_j : = \frac{1}{\sqrt{1+\gamma_{j-1}}}\Big(\xi_{j-1}-\widehat \xi_{j-1}(\theta) +\eps_j\Big), \quad j\ge 1
$$
is the innovation sequence of i.i.d. $N(0,1)$ random variables.
The process
$\widehat \xi_j(\theta) := \E_\theta\big(\xi_j|\F^X_j\big)$  and the deterministic sequence
$\gamma_j := \E_\theta\big(\xi_j-\widehat \xi_j(\theta)\big)^2$ satisfy the generalized Kalman filter equations
\begin{align}\label{KF1}
\widehat \xi_j(\theta) &= a \widehat \xi_{j-1}(\theta) +
\frac{a \gamma_{j-1}}{1+\gamma_{j-1}}\Big(X_j - f(X_{j-1},\theta)-\widehat \xi_{j-1}(\theta)\Big) \\
\label{KF2}
\gamma_j &= a^2\gamma_{j-1} + 1 -\frac{a^2 \gamma^2_{j-1}}{1+\gamma_{j-1}},
\end{align}
subject to $\widehat \xi_0=0$ and $\gamma_0=\var(\xi_0)$.

To avoid inessential technicalities, we shall assume that $\xi_0$ and $X_0$ are independent and
$\xi_0\sim N(0, \gamma)$, where $\gamma$ is the unique positive root of the equation
$$
\gamma = a^2\gamma + 1 -\frac{a^2 \gamma^2}{1+\gamma}.
$$
In this case, the conditional mean $\widehat \xi_j(\theta)$ satisfies \eqref{KF1} with constant coefficients:
\begin{equation}
\label{KF1c}
\widehat \xi_j(\theta) = a \widehat \xi_{j-1}(\theta) +
\frac{a \gamma}{1+\gamma}\Big(X_j - f(X_{j-1},\theta)-\widehat \xi_{j-1}(\theta)\Big).
\end{equation}
It can be seen that $(\gamma_j)$ converges to $\gamma$ exponentially fast and all the results claimed below hold for $\xi_0$ with
an arbitrary Gaussian distribution.

Let $\bar X_0:=X_0$ and $\bar X_j := X_j-f(X_{j-1},\theta)$, $j\ge 1$ and note that $\F^{\bar X}_{j}= \F^{X}_{j}$ for all $j\ge 0$.
By the definition of the conditional expectation, $\xi_{j-1}-\widehat\xi_{j-1}$ is orthogonal to $\F^{X}_{j-1}$, and thus to $\F^{\bar X}_{j-1}$.
Moreover, since the process $(\bar X_j, \xi_j,\widehat\xi_j)$ is Gaussian, $\xi_{j-1}-\widehat\xi_{j-1}$ is independent of
$\F^{\bar X}_{j-1}$ and thus of $\F^X_{j-1}$ as well. Further, since
$\sqrt{1+\gamma}\widehat \eps_j = \xi_{j-1}-\widehat\xi_{j-1}+\eps_j$ and $\eps_j$ is independent of $\F_{j-1}$,
independence of $\widehat \eps_j$ and $\F^X_{j-1}$ follows and  the  representation \eqref{inno}
implies that the likelihood of the data $X^n$ is given by
\begin{multline}
\label{lh}
L_n(X^n;\theta) = \frac{1}{\sqrt{2\pi}}\exp\Big(-\frac 1  2 X_0^2\Big)\times \\\left(\frac 1{\sqrt{2\pi(1+\gamma)}}\right)^n
\exp\left(
-\frac 1 2 \frac 1 {{1+\gamma}}\sum_{j=1}^n \Big(X_j-f(X_{j-1},\theta) - \widehat \xi_{j-1}(\theta)\Big)^2
\right)
\end{multline}
The likelihood function is discontinuous in $\theta$ and hence we are faced with an irregular
statistical experiment. In such problems, the maximum likelihood estimator is often asymptotically
inferior to the Bayes estimator $\widetilde \theta_n$, while the latter is typically asymptotically efficient for arbitrary
continuous positive prior densities in the following minimax sense (see Theorem 9.1, \cite{IH}):
$$
\lim_{\delta\to 0}\varliminf_{n\to\infty}\inf_{T_n}\sup_{\theta:|\theta-\theta_0|\le \delta}n^2\E_{\theta}\big(T_n - \theta\big)^2\ge
\lim_{\delta\to 0}\varliminf_{n\to\infty}\sup_{\theta:|\theta-\theta_0|\le \delta}n^2\E_{\theta}\big(\widetilde \theta_n - \theta\big)^2,
$$
where $T_n$'s are $\F^X_n$-measurable statistics.

The Bayes estimator for the problem at hand has relatively low computational complexity,
since the likelihood function is piecewise constant in $\theta$ and has at most $n$ jumps at
$\{X_0,...,X_{n-1}\}$. More precisely, for a prior density  $\pi$, the Bayes estimator with respect to the
quadratic risk is given by
\begin{equation}\label{BE}
\widetilde \theta_n :=
\frac{\int_\Theta \theta L_n(X^n;\theta)\pi(\theta)d\theta}{\int_\Theta L_n(X^n;\theta)\pi(\theta)d\theta}=
\frac
{\sum_{j: \{X_{(j-1)}, X_{(j)}\}\cap \Theta\ne \emptyset} L_n\big(X^n;X_{(j)}\big) \int_{X_{(j-1)}}^{X_{(j)}}\theta \pi(\theta)d\theta}
{\sum_{j: \{X_{(j-1)}, X_{(j)}\}\cap \Theta\ne \emptyset} L_n\big(X^n;X_{(j)}\big) \int_{X_{(j-1)}}^{X_{(j)}} \pi(\theta)d\theta},
\end{equation}
where $X_{(j)}$ is the $j$-th order statistic of $X^{n}$.  If the prior $\pi$ is chosen so that numerical integration in the right hand side is avoided, the computation of $\widetilde \theta_n$ can be carried out in polynomial  time of order $O(n^2)$.

\medskip

The following property, whose proof is deferred to Appendix \ref{app-sec} (see Lemma \ref{lem-erg}), plays a crucial role in the forthcoming analysis
\begin{prop}\label{prop1}
Assume
\begin{equation}
\label{AE}\tag{E}
|\rho^\pm|<1\quad \text{and}\quad |a|<1,
\end{equation}
then the Markov process $(X_j,\xi_j)$ is geometrically ergodic under $\P_\theta$, $\theta \in \Theta$,  with the unique invariant probability density $p(x,y;\theta)$.
\end{prop}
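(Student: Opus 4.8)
The plan is to verify the standard Meyn--Tweedie drift-and-minorization conditions for the bivariate Markov chain $Z_j := (X_j,\xi_j)$ on $\Real^2$, equipped with the Borel $\sigma$-algebra. First I would write down the one-step transition kernel explicitly: given $Z_{j-1}=(x,y)$, we have $\xi_j = a y + \zeta_j$ and $X_j = f(x,\theta) + y + \eps_j$ with $(\eps_j,\zeta_j)$ independent standard Gaussians, so the conditional law of $Z_j$ is a nondegenerate Gaussian on $\Real^2$ with mean $(f(x,\theta)+y,\,ay)$ and a fixed, invertible covariance matrix. Consequently the kernel has a strictly positive continuous density with respect to Lebesgue measure on $\Real^2$, from which $\psi$-irreducibility (with $\psi$ Lebesgue measure), aperiodicity, and the T-chain/Feller property are immediate; moreover every compact set $C\subset\Real^2$ is a small set, because the density is bounded below by a positive constant on $C\times C'$ for any compact $C'$, giving a uniform minorization $P(z,\cdot)\ge \delta\,\nu(\cdot)$ for $z\in C$.

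The substantive step is the geometric (Foster--Lyapunov) drift condition. The natural candidate is $V(x,y) = 1 + x^2 + c\,y^2$ for a constant $c>0$ to be tuned. Using $|f(x,\theta)|\le \rho|x|$ with $\rho=|\rho^+|\vee|\rho^-|<1$ and the recursions, one computes
\begin{align*}
\E\big[X_j^2\mid Z_{j-1}=(x,y)\big] &= \big(f(x,\theta)+y\big)^2 + 1 \le (1+\beta^{-1})\rho^2 x^2 + (1+\beta)y^2 + 1,\\
\E\big[\xi_j^2\mid Z_{j-1}=(x,y)\big] &= a^2 y^2 + 1,
\end{align*}
for any $\beta>0$. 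Hence $\E[V(Z_j)\mid Z_{j-1}=(x,y)] \le (1+\beta^{-1})\rho^2 x^2 + \big[(1+\beta) + c\,a^2\big] y^2 + (1+c+1)$. Since $\rho<1$ we may first fix $\beta$ small enough that $(1+\beta^{-1})\rho^2 =: \lambda_1 < 1$; then, because $a^2<1$, choose $c$ large enough that $(1+\beta) + c a^2 \le \lambda_2 c$ with $\lambda_2<1$ (possible as $(1+\beta)/c\to 0$ and $a^2<1$, so the bracket is $\le (a^2 + \tfrac{1+\beta}{c})c$). Setting $\lambda := \max(\lambda_1,\lambda_2)<1$ yields $\E[V(Z_j)\mid Z_{j-1}=z] \le \lambda V(z) + b$ for a finite constant $b$, i.e. the geometric drift condition $PV \le \lambda V + b\,\one{z\in C}$ holds with $C=\{V\le R\}$ a (compact, hence small) sublevel set for $R$ large. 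Note this bound is uniform over $\theta\in\Theta$.

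With the minorization on small sets, $\psi$-irreducibility, aperiodicity, and the geometric drift condition in hand, the theorem follows by invoking the standard $V$-uniform/geometric ergodicity theorem (Theorem 15.0.1 / 16.0.1 in Meyn--Tweedie), which gives a unique invariant probability measure $\mu_\theta$ with $\|P^n(z,\cdot)-\mu_\theta\|_{V} \le R(z)\,r^n$ for some $r<1$; since $\mu_\theta$ is absolutely continuous (being invariant for a kernel with a density), it has a density $p(x,y;\theta)$. I expect the only mildly delicate point to be the bookkeeping in the drift computation --- in particular the correct order of choosing $\beta$ then $c$ so that both coordinates contract simultaneously --- together with checking that all constants can be taken independent of $\theta\in\Theta$ (which they are, since the only $\theta$-dependence enters through $f$, and is controlled uniformly by $|f(x,\theta)|\le\rho|x|$); everything else is routine. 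Since the excerpt says the proof is deferred to Lemma~\ref{lem-erg} in the appendix, the statement here only needs this sketch.
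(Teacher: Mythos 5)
Your proposal is correct and follows essentially the same route as the paper's Lemma~\ref{lem-erg}: strictly positive Gaussian transition density giving $\psi$-irreducibility, aperiodicity and smallness of compacts, a Foster--Lyapunov drift condition, and then Theorem 15.0.1 of Meyn--Tweedie. Two remarks. First, a small slip in the tuning: $(1+\beta^{-1})\rho^2<1$ forces $\beta>\rho^2/(1-\rho^2)$, so you need $\beta$ \emph{large}, not small; since $c$ is chosen afterwards and can absorb any fixed $(1+\beta)$, nothing downstream breaks. Second, your weighted Lyapunov function $V(x,y)=1+x^2+c\,y^2$ is actually the more careful choice, and the weight $c$ is not optional. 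The paper uses the unweighted $V(x,y)=|x|+|y|$, but because $\xi_{j-1}$ enters the $X$-recursion with coefficient $1$, one only gets $PV-V\le -(1-\rho)|x|+|a|\,|y|+\mathrm{const}$, which is not bounded by $-\delta V+b\one{(x,y)\in B_R}$ when $|y|$ is large and $|x|$ small; a weight on the $y$-coordinate (your $c$, or $|x|+c|y|$ with $c>1/(\lambda-|a|)$ for some $\lambda\in(|a|\vee\rho,1)$ in the $\ell_1$ version) is needed to absorb this cross term. So the one point you flagged as delicate --- choosing $\beta$ and then $c$ so that both coordinates contract simultaneously --- is exactly the point that matters, and your bookkeeping there is sound; the uniformity in $\theta$ via $|f(x,\theta)|\le\rho|x|$ and the absolute continuity of the invariant measure are handled as in the paper.
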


\begin{rem}\label{rem:1.2}
Obviously,  recurrence of $(X_j)$ is necessary for consistent estimation of the threshold parameter. The condition \eqref{AE}
guarantees positive recurrence of the process $(X_j)$, which is an essential ingredient in derivation of the large sample asymptotic
of Theorem \ref{thm} below. In the null recurrent case, i.e. when one of the coefficients have unit absolute value, consistent estimation of $\theta$
seems to be possible and some preliminary calculations show that the corresponding rate may depend on the distribution tail  of the driving noise.
The exact characterization of the large sample asymptotic in this setting remains an open problem, even for independent innovations.
\end{rem}

\medskip

The main result of this paper is the following characterization of the asymptotic distribution of the sequence of Bayes estimators:

\begin{thm}\label{thm}
Let $(\widetilde \theta_n)$ be the sequence of the Bayes estimators with respect to the
quadratic loss function and a prior with continuous positive density $\pi$.
Then for any continuous function $\phi$ with at most polynomial growth
$$
\lim_n \E_{\theta_0} \phi\left(n\big(\widetilde \theta_n-\theta_0\big)\right) = \E_{\theta_0} \phi(\tilde u),
$$
uniformly on compacts from $\Theta$, where
$$
\tilde u = \frac{\int_\Real u Z(u)du}{\int_\Real Z(u)du}
$$
and $\ln Z(u)$, $u\in \Real$ is the following two sided compound Poisson process:
\begin{equation}
\label{Y}
\ln Z(u) = \left\{
\begin{array}{ll}
 \sum_{j=1}^{\Pi^+(u)} \Big(  \beta \eps^+_{j}-\frac 1 2 \beta^2\Big) & u\ge 0 \\
 &\\
 \sum_{j=1}^{\Pi^-(|u|)}  \Big(  \beta \eps^-_{j}-\frac 1 2 \beta^2\Big)   & u < 0
\end{array}
\right.
\end{equation}
Here $\Pi^{+}$, $\Pi^{-}$ are i.i.d Poisson processes with the intensity
$$
\varpi = \int_\Real p(\theta_0,y;\theta_0)dy,
$$
$p(x,y;\theta_0)$ is the unique invariant probability density of the Markov process $(X_j,\xi_j)$ under
$\P_{\theta_0}$,
$\big(\eps^{\pm }_j\big)$  are i.i.d. $N(0,1)$ random variables, independent of $\Pi^{+}$ and $\Pi^{-}$ and
$$
\beta^2 =
\left(\frac{\theta_0(\rho^+-\rho^-)}{\sqrt{1+\gamma}}\right)^2\left(1+\Big(\frac{a\gamma}{1+\gamma}\Big)^2\sum_{j=0}^\infty\Big(\frac{a}{1+\gamma}\Big)^{2j}\right)=
\theta^2_0(\rho^+-\rho^-)^2
\frac{1+\gamma^3}{(1+\gamma)(1+\gamma^2)}.
$$
\end{thm}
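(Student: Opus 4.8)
The plan is to follow the classical Ibragimov–Khasminskii programme for irregular (change-point type) experiments, as developed in \cite{IH} and adapted to the statistical analysis of threshold models. The central object is the normalized likelihood ratio process
$$
Z_n(u) := \frac{L_n\big(X^n;\theta_0 + u/n\big)}{L_n\big(X^n;\theta_0\big)}, \qquad u \in U_n := n(\Theta - \theta_0),
$$
viewed as a random element of a suitable functional space (the Skorokhod-type space $\mathcal D_0(\Real)$ of functions vanishing at $\pm\infty$ is the natural choice here, since the local parametrization at rate $1/n$ is dictated by the jump structure in \eqref{lh}). Once one establishes (a) the weak convergence $Z_n(\cdot) \Rightarrow Z(\cdot)$ in this space under $\P_{\theta_0}$, and (b) a uniform integrability / tail estimate of the form $\E_{\theta_0} Z_n^{1/2}(u) \le C e^{-c|u|}$ for $|u|$ large, then the continuous-mapping-type argument of Ibragimov–Khasminskii (Theorem 1.10.2 in \cite{IH}) yields convergence of the Bayes estimator $n(\widetilde\theta_n - \theta_0) = \int u Z_n(u)\,du / \int Z_n(u)\,du \Rightarrow \tilde u$, together with convergence of polynomial moments, which is exactly the statement with test functions $\phi$ of polynomial growth. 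Uniformity on compacts in $\theta_0$ follows by checking that all the constants in the estimates can be chosen locally uniformly.

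The heart of the matter is step (a), the identification of the limit process. Writing out $\ln Z_n(u)$ from \eqref{lh}, and using the innovation representation \eqref{inno} together with the Kalman recursion \eqref{KF1c}, the log-likelihood difference becomes
$$
\ln Z_n(u) = \frac{1}{1+\gamma}\sum_{j=1}^n \Big( \widehat\eps_j \sqrt{1+\gamma}\, \Delta_j(u) - \tfrac12 \Delta_j^2(u)\Big),
$$
where $\Delta_j(u)$ collects the change in $f(X_{j-1},\theta) + \widehat\xi_{j-1}(\theta)$ when $\theta$ moves from $\theta_0$ to $\theta_0 + u/n$. The term $f(X_{j-1},\theta_0+u/n) - f(X_{j-1},\theta_0) = (\rho^- - \rho^+) X_{j-1}\one{\theta_0 \le X_{j-1} < \theta_0 + u/n}$ (for $u>0$) is nonzero only when $X_{j-1}$ falls in a window of width $u/n$ around $\theta_0$; by geometric ergodicity (Proposition \ref{prop1}) the number of such indices $j \le n$ converges in distribution to a Poisson random variable with mean $u \cdot \varpi$, and on each such visit $X_{j-1} \approx \theta_0$, contributing the factor $\theta_0(\rho^+-\rho^-)$. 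The filtered term $\widehat\xi_{j-1}(\theta_0+u/n) - \widehat\xi_{j-1}(\theta_0)$ is handled by unrolling the linear recursion \eqref{KF1c}: the difference is a geometric moving average, with ratio $a/(1+\gamma)$, of the past drift increments, which is why the coefficient $\beta^2$ picks up the extra factor $1 + \big(\tfrac{a\gamma}{1+\gamma}\big)^2 \sum_{j\ge0}\big(\tfrac{a}{1+\gamma}\big)^{2j}$. The upshot is that $\ln Z_n(u)$ is asymptotically a sum over the Poissonian visit times of i.i.d. increments $\beta\widehat\eps_j - \tfrac12\beta^2$ (the centering being forced by the $-\tfrac12\Delta_j^2$ term and a law-of-large-numbers averaging of the cross terms against non-visiting indices, which vanish), i.e. exactly the compound Poisson process \eqref{Y}. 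Making this rigorous requires a careful finite-dimensional CLT-type argument: condition on the visit times, show the Poisson limit for their number and the convergence of their rescaled locations to a homogeneous Poisson process on $\Real_+$, and show that the innovations $\widehat\eps_j$ attached to distinct visits are asymptotically independent standard normals; tightness in $\mathcal D_0$ follows from the monotone-in-$u$ structure of the Poisson counting together with the exponential tail bound from step (b).

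I expect step (a), and within it the asymptotic independence and the precise bookkeeping of the filtered correction $\widehat\xi_{j-1}(\theta_0+u/n)-\widehat\xi_{j-1}(\theta_0)$, to be the main obstacle. The difficulty is that, unlike in the i.i.d.-innovations TAR problem, a perturbation of $\theta$ at a single visit time propagates forward through the Kalman filter into all subsequent $\widehat\xi_j$, so the increments of $\ln Z_n$ are not a clean sum of one-step contributions; one must show that this propagation only rescales the effective jump size (producing the closed form for $\beta^2$) and does not destroy the Poissonian/independence structure in the limit. The required tools are: geometric ergodicity and the associated mixing/coupling bounds for $(X_j,\xi_j)$ (Proposition \ref{prop1}), a local limit heuristic that the empirical occupation of the shrinking window $[\theta_0,\theta_0+u/n)$ is asymptotically Poisson (a renewal/regeneration argument via the ergodic splitting of the chain), and moment bounds on $X_{j-1}$ near the threshold to control error terms. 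The tail estimate in step (b) is more routine: $\E_{\theta_0} Z_n^{1/2}(u)$ factorizes over regeneration blocks and each visit to the window contributes a factor strictly less than one (a Hellinger-type contraction $\E e^{(\beta\widehat\eps - \frac12\beta^2)/2} = e^{-\beta^2/8} < 1$ per visit), giving geometric decay in the expected number of visits, hence in $|u|$.
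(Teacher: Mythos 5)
Your overall strategy coincides with the paper's: both run the Ibragimov--Khasminskii programme via Theorem I.10.2 of \cite{IH}, reducing the theorem to (a) convergence of the localized likelihood ratio $Z_n(u)$ to the compound Poisson limit and (b) a H\"older-type estimate plus a tail bound on $\E_{\theta_0}Z_n^{1/2}(u)$, and your identification of $\beta^2$ --- one direct contribution $\theta_0(\rho^+-\rho^-)/\sqrt{1+\gamma}$ at the visit time plus a geometric moving-average echo through the Kalman recursion with ratio $a/(1+\gamma)$ --- is exactly the computation that appears in the paper's Lemma \ref{lem-3} via the kernel $Q(i)$. Where you differ in execution of step (a): you propose conditioning on the visit times and proving a Poisson limit for their rescaled locations, whereas the paper works with characteristic functions of finite-dimensional increments and a Bernstein blocking scheme ($n^{1/2}$ blocks of length $n^{1/2}$, discarding $n^{1/4}$ terms per block, in the spirit of Meyer's Poisson limit theorem for mixing rare events). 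The blocking route has the advantage of handling, in one stroke, precisely the difficulty you correctly single out --- the forward propagation of a visit through the filter into all subsequent $\widehat\xi_j$ --- because within a block the propagated term decays geometrically and across blocks the discarded $n^{1/4}$ leading terms absorb it; note also that Theorem I.10.2 needs only finite-dimensional convergence plus the two tightness conditions, so no weak convergence in a Skorokhod space has to be established.

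The one place where your proposal has a genuine gap is step (b). You claim an exponential bound $\E_{\theta_0}Z_n^{1/2}(u)\le Ce^{-c|u|}$ obtained by factorizing over regeneration blocks with a Hellinger contraction $e^{-\beta^2/8}$ per visit, and call this routine. The factorization does not go through as stated: the per-visit contributions to $\ln Z_n(u)$ are \emph{not} independent across regeneration blocks, because the filter perturbation $\delta_j=\widehat\xi_j(\theta_0)-\widehat\xi_j(\theta_0+u/n)$ is a functional of \emph{all} past visits and does not reset at regeneration times; moreover, when $u$ is comparable to $n$ the window $[\theta_0,\theta_0+u/n]$ is no longer shrinking and $\delta_j$ need not be small, so the per-visit quadratic contribution can degenerate. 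The paper's Lemma \ref{lem-ld} instead proves only the polynomial bound $C(p)/|u|^p$ (which is all Theorem I.10.2 requires), and to do so must split into the regimes $u\le n^s$ and $u>n^s$: in the first, $\delta_j$ is shown to be uniformly small so that each visit contributes a definite amount to $\sum_j V_j^2$; in the second, one restricts to visits preceded by an excursion away from $\Theta$ (the event $\Gamma_{j-1,k'}$) to control $\delta_j$, and concentration of the visit count is obtained from weak-dependence moment inequalities (Dedecker--Doukhan, Doukhan--Louhichi) rather than from any product structure. If you want to salvage your step (b), you would need to supply an argument of this kind; the regeneration heuristic alone is not a proof here.
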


\subsection{Generalizations}
\

\medskip
\noindent
{\bf 1.} If $\xi_{j-1}$ is replaced in \eqref{ar} with $\xi_j$, i.e. if the colored noise component enters without the one-step delay,
the model
$$
X_j = f(X_{j-1},\theta) + a\xi_{j-1}+\zeta_j + \eps_j.
$$
is obtained. In this case, the Kalman filter equations take a slightly different form and the asymptotic analysis can be carried out
exactly as in our setting.

On the other hand, if the white noise component $\eps_j$ is omitted, the observed process satisfies the equation
\begin{multline*}
X_j =  f(X_{j-1},\theta) + \xi_{j} = f(X_{j-1},\theta) + a\xi_{j-1}+\zeta_j = \\
f(X_{j-1},\theta) + a\Big(X_{j-1}-f(X_{j-2},\theta)\Big)+\zeta_j.
\end{multline*}
Being a completely observed system, this model fits the setting of \cite{Ch93} or \cite{ChK11} after a straightforward
modification.

\medskip
\noindent
{\bf 2.} Our method is directly applicable to the models, where the colored noise is generated by a linear multivariate recursion:
\begin{equation}
\label{bxi}
\bxi_j = A \bxi_{j-1} + B\bzeta_j,\quad j\ge 1,
\end{equation}
where $(\bzeta_j)$ are i.i.d. standard Gaussian vectors in $\Real^M$ and $A$ and $B$ are $N\times N$ and $N\times M$ matrices
respectively. In this case, the observed process satisfies the scalar recursion
$$
X_j = \Big(\rho^+ \one{X_{j-1}\ge \theta} + \rho^- \one{X_{j-1} <\theta}\Big)X_{j-1} + C^\top \bxi_{j-1} + \eps_j,
$$
where $C$ is a column vector of size $N$. In this setting, the Kalman filter equations read (cf. \eqref{KF1} and \eqref{KF2})
\begin{align*}
\widehat \bxi_j(\theta) &= A \widehat \bxi_{j-1}(\theta) +
\frac{A \bgamma_{j-1} C}{1+C^\top \bgamma_{j-1}C}
\Big(X_j - f(X_{j-1},\theta)-C^\top \widehat \bxi_{j-1}(\theta)\Big) \\
\bgamma_j &= A\bgamma_{j-1} A^\top + BB^\top -\frac{A\bgamma_{j-1}CC^\top \bgamma_{j-1}A^\top}{1+C^\top \bgamma_{j-1}C},
\end{align*}
subject to $\widehat\bxi_0=0$ and $\bgamma_0=\cov(\xi_0,\xi_0)$.
If  $A$ is a stability matrix, i.e. the absolute values of its eigenvalues are strictly less than 1, and the pair $(A,B)$ is controllable:
$$
\mathrm{rank}\begin{pmatrix}
B & A B & ... & A^{N-1} B
\end{pmatrix}=N,
$$
then the solution of the Riccati equation converges to the matrix $\bgamma$, which is the unique strictly positive definite
root of the corresponding algebraic Riccati equation (see e.g. \cite{LiSh2})
$$
\bgamma = A\bgamma A^\top + BB^\top -\frac{A\bgamma CC^\top \bgamma A^\top}{1+C^\top \bgamma C}.
$$
The statement of Theorem \ref{thm} holds  with the rate
$$
\varpi =\int_{\Real^N} p(\theta_0,y;\theta_0)dy,
$$
where $p(x,y;\theta_0)$ is the invariant density of the process $(\bxi_j,X_j)$, and
$$
\beta^2=
\left(\frac{\theta_0(\rho^+-\rho^-)}{\sqrt{1+C^\top \bgamma C}}\right)^2
\left(1+
\sum_{j=0}^\infty \bigg(
C^\top \Big(A-\frac{A\bgamma CC^\top}{1+C^\top \bgamma C}\Big)^j
\frac{A^\top \bgamma C}{1+C^\top \bgamma C}
\bigg)^2
\right).
$$
The latter formula emerges in the proof of the Lemma \ref{lem-3} below,  with the obvious adjustments to the multivariate setting.

The model \eqref{bxi} incorporates the case of the stationary ARMA$(p,q)$ noise:
$$
\xi_j = -\sum_{k=1}^p a_k \xi_{j-k} + \sum_{\ell=0}^q b_\ell \zeta_{j-\ell},
$$
where $a_1,...,a_p$ and $b_0,...,b_q$ are  constants, such that the roots of the polynomial $a_p z^p +...+a_1z+1$
lie in the open unit disk of the complex plain. The canonical state space representation \eqref{bxi} is obtained through the
usual state augmentation
$$
\bxi_j^\top  := \begin{pmatrix}
\xi_j, &
... &
\xi_{j-p+1,} &
\eps_j, &
... &
\eps_{j-q+1}
\end{pmatrix}^\top\in \Real^{p+q}.
$$

\medskip
\noindent
{\bf 3.} Assuming noises with Gaussian distribution is essential, since in this case the filtering equations for the
conditional density of $\xi_j$ given $\F^X_j$ are finite dimensional and, moreover, the conditional mean
$\widehat \xi_j$  satisfies the linear recursion, whose explicit solution is used on several occasions
through the proof and appears in the expression for $\beta^2$. The result can be extended to more general conditionally
Gaussian models, such as, e.g., higher order TAR with possibly heteroscedastic driving noise.
We expect that for non-Gaussian noise, the limit likelihood will still be a two-sided compound Poisson process,
but no neat closed form expression for $\beta^2$ will be available.

\medskip
\noindent
{\bf 4.}
In principle, our technique is applicable to Gaussian sequences with non-Markov structure, such as fractional noises, etc.
The analysis in this setting is more complicated, depending on the ergodic properties of the processes and the
complexity of the filtering equations (whose linearity will  be intact).

\medskip
\noindent
{\bf 5.} Joint asymptotic analysis of the likelihood based estimators of all the parameters in the model can be in principle carried out
using the same weak convergence approach, used in the proof of Theorem \ref{thm} below (see a brief outline in Section \ref{sec-2.2}).
In this case the likelihood  \eqref{lh} is considered as a function of the four unknown parameters $\rho^+,\rho^-, a\in (-1,1)$ and
$\theta\in \Theta$ and the corresponding normalized likelihood ratios read (cf. \eqref{Znu} below)
$$
Z_n(y,w,v,u) := \frac{
L_n\Big(X^n;
\rho^+_0+\frac{y}{\sqrt{n}},\rho^-_0+\frac {w}{\sqrt{n}},a_0+\frac{v}{\sqrt{n}},\theta_0+\frac{u}{n}
\Big)}
{
L_n\big(X^n;\rho^+_0,\rho^-_0,a_0,\theta_0\big)
},
$$
for a fixed value of the parameter vector $(\rho^+_0,\rho^-_0, a_0,\theta_0)$ and  variables $y,w,v,u$ taking values in
appropriate sets.
Note that the localizing scaling of $\theta$ differs from that of the other parameters, in which the likelihood function is smooth.
It is possible to check the weak convergence of processes
\begin{equation}
\label{Znywvu}
Z_n(y,w,v,u) \stackrel{w}{\Longrightarrow} Z(y,w,v) Z(u), \quad \text{as \ } n\to \infty,
\end{equation}
where $Z(u)$ is the same as in \eqref{Y} and
$$
Z(y,w,v) = \exp \left(
\eta^\top \nu -\frac 1 2 \eta^\top I\eta
\right), \quad \eta:=(y,w,v)\in \Real^3,
$$
with  $\nu\sim N(0,I)$ and the Fisher information matrix $I$, whose
explicit expression is cumbersome. The convergence \eqref{Znywvu} implies the weak convergence of errors for the corresponding Bayes estimators
$$
\Big(\sqrt{n}(\widetilde \rho^+_n -\rho^+_0), \sqrt{n} (\widetilde \rho^-_n-\rho^-_0), \sqrt{n}(\widetilde a_n-a_0), n (\widetilde \theta_n-\theta_0)\Big)
\stackrel{w}{\Longrightarrow}
\big(\tilde y, \tilde w, \tilde v, \tilde u\big),
$$
with  $\tilde u$ as in Theorem \ref{thm} and  zero mean normal vector $(\tilde y, \tilde w, \tilde v)$ with covariance $I^{-1}$, independent of $\tilde u$.
Since the LAN property holds with respect to $(\rho^+,\rho^-,a)$, the corresponding Maximum Likelihood Estimators (MLEs) are
also asymptotically efficient.

\medskip

The proof of Theorem \ref{thm} is given in the next section and some supplementary results, concerning the ergodic properties of the relevant processes, appear in  Appendix \ref{app-sec}. Some simulations, demonstrating the contributions  of this paper, are gathered in Section \ref{sec-sim}.

\section{The proof of Theorem \ref{thm}}\label{sec-pf}

\subsection{The notations}
The actual unknown value of the parameter will be denoted by $\theta_0$ and  will be assumed to belong to a generic compact $K\subset \Theta$.
We shall use $C_i$, $i\in \mathbb{N}$ to denote absolute constants, whose values depend only on the known parameters of the model and the compact $K$ and may change at each appearance.
For random sequences $(x_n)$, $(y_n)$ and a positive real decreasing sequence $(r_n)$,  $x_n=y_n+O(r_n)$ means that
$\sup_n |x_n-y_n|/r_n$ is a random variable with moments, bounded uniformly over $K$.
Throughout, we reserve
$$
b:=\frac a{1+\gamma} \quad \text{and}\quad c:=\frac{a \gamma}{1+\gamma}.
$$
For an integer $n$, the quantities such as $n^{1/2}$ and $n^{1/4}$,
are understood to be rounded to the nearest integer.
For $\ell < k$, we set $\sum_{j=k}^\ell (...) =0$ and $\prod_{j=k}^\ell (...)=1$.
For a vector $z\in \Real^d$, $\|z\|$ stands for the $\ell_1$-norm.
Finally, $\widetilde \P_{\theta_0}$ and $\widetilde \E_{\theta_0}$
denote  the probability on $(\Omega,\F)$ and the corresponding expectation, under which all the processes are stationary
(the unique existence of such  probability is argued in Appendix \ref{app-sec}).

\subsection{Preliminaries}\label{sec-2.2}
Consider the scaled sequence of likelihoods
\begin{equation}\label{Znu}
Z_n(u) := \frac{L_n(X^n;\theta_0+u/n)}{L_n(X^n;\theta_0)}, \quad u\in \mathbb{U}_n:=n(\Theta -\theta_0), \quad n\ge 1.
\end{equation}
The Bayes estimator of $\theta$ is given by
\begin{multline*}
\widetilde \theta_n =
\frac{\int_\Theta \theta L_n(X^n;\theta)\pi(\theta)d\theta}{\int_\Theta L_n(X^n;\theta)\pi(\theta)d\theta} = \frac{\int_{\mathbb{U}_n} (\theta_0+u/n) L_n(X^n;\theta_0+u/n)\pi(\theta_0+u/n)du}
{\int_{\mathbb{U}_n}  L_n(X^n;\theta_0+u/n)\pi(\theta_0+u/n)du} =\\
 \theta_0 + \frac 1 n \frac{\int_{\mathbb{U}_n} u L_n(X^n;\theta_0+u/n)\pi(\theta_0+u/n)du}
{\int_{\mathbb{U}_n}  L_n(X^n;\theta_0+u/n)\pi(\theta_0+u/n)du}
\end{multline*}
and thus
$$
n\big(\widetilde \theta_n-\theta_0\big) =
\frac{\int_{\mathbb{U}_n} u Z_n(u)\pi(\theta_0+u/n)du}
{\int_{\mathbb{U}_n}  Z_n(u)\pi(\theta_0+u/n)du}.
$$
The right hand side is a functional of $Z_n(u)$, $u\in \mathbb{U}_n$, which under appropriate tightness conditions,
converges weakly to the random variable
$$
\tilde u = \frac{\int_{\Real} u Z(u)du}
{\int_{\Real}  Z(u)du},
$$
if the finite dimensional distributions of $Z_n(u)$ converge to those of $Z(u)$.
More precisely, the result claimed in Theorem \ref{thm} follows from Theorem I.10.2
in \cite{IH}, whose assumptions we check  in subsections \ref{sec-fdd} and \ref{sec-tight} below.

\subsection{Convergence of finite dimensional distributions}\label{sec-fdd}

We shall prove that the characteristic functions of the finite dimensional distributions of $\log$-likelihoods  $Y_n(u):=\ln Z_n(u)$
converge to those of the compound Poisson process  in \eqref{Y}. To this end, we will show that
for any $d\ge 1$ and real numbers $u_{-d}<...< u_d$
\begin{multline}\label{Poi}
\psi_n(\lambda) := \E_{\theta_0}\exp\left(
\sum_{k=-d}^d \iunit\lambda_k \Big(Y_n(u_k)-Y_n(u_{k-1})\Big)
\right)\xrightarrow{n\to\infty} \\
\prod_{k=-d}^d \exp\bigg(\varpi(u_k-u_{k-1})
\Big(
e^{-\frac \iunit 2 \lambda_k \beta^2-\frac 1 2 \beta^2\lambda_k^2 }-1
\Big)
\bigg)
=:\psi(\lambda), \quad \forall \lambda \in \Real^d,
\end{multline}
uniformly over compacts from $\Theta$,
where $\varpi$ and $\beta$ are constants, defined in Theorem \ref{thm}.
Without loss of generality, we shall assume that $u_0=0$ and consider only positive $u_k$'s. The symmetric case of negative
$u_k$'s is treated similarly and independence of the emerging compound Poisson processes $\Pi^+$ and $\Pi^-$ will be evident
from the proof.

Let $\widehat \Xi_j$ be the vector with the entries
\begin{equation}
\label{Xi}
\widehat \Xi^k_j:=\widehat \xi_j(\theta_0+u_k/n), \quad k=0,...,d.
\end{equation}
Using the expression \eqref{lh} for the likelihood, we get
\begin{equation}
\label{S}
\begin{aligned}
Y_n(u_k)-&Y_n(u_{k-1}) =\\
&  -\frac 1 2 \frac 1{1+\gamma}\sum_{j=1}^n \left(X_j-f(X_{j-1},\theta_0+u_k/n)-\widehat \Xi^k_{j-1}\right)^2 +\\
& \frac 1 2 \frac 1{1+\gamma}\sum_{j=1}^n \left(X_j-f(X_{j-1},\theta_0+u_{k-1}/n)-\widehat \Xi^{k-1}_{j-1}\right)^2 =\\
&  \sum_{j=1}^n -\frac 1 2 \frac 1{1+\gamma}
\left(
X_{j-1} (\rho^+-\rho^-) \one{X_{j-1}\in D^k_n}
-\widehat \Xi^k_{j-1}
+\widehat \Xi^{k-1}_{j-1}
\right)\times\\
&
\left(
2X_j -f(X_{j-1},\theta_0+u_{k}/n)-f(X_{j-1},\theta_0+u_{k-1}/n)
-\widehat \Xi^k_{j-1}
-\widehat \Xi^{k-1}_{j-1}
\right)
\end{aligned}
\end{equation}
where we set $D_n^k:= \Big[\theta_0 + \frac {u_{k-1}}{n}, \theta_0 + \frac{u_k}{n}\Big]$ and used the identity
$$
f(x,\theta_0 + u_{k-1}/n)-f(x,\theta_0 + u_{k}/n) = x (\rho^+-\rho^-)\one{x\in D^k_n}.
$$
If we define
$$
\delta^k_j:= \widehat \Xi^{k-1}_j-\widehat \Xi^{k}_j, \quad \text{and} \quad
\sigma^{k-1}_j:=\widehat \Xi^{0}_j\; -\widehat \Xi^{k-1}_j,
$$
the expression \eqref{S} takes the following form under $\P_{\theta_0}$
\begin{multline}
\label{SS}
 Y_n(u_k)-Y_n(u_{k-1}) =
\sum_{j=1}^n-\frac 1{1+\gamma}\left(
X_{j-1} (\rho^+-\rho^-) \one{X_{j-1}\in D^k_n}
+\delta^k_{j-1}
\right)\times \\
\bigg(
\sqrt{1+\gamma}\widehat \eps_j +
X_{j-1}(\rho^+-\rho^-)\one{X_{j-1}\in B^{k-1}_n}+\sigma^{k-1}_{j-1}
+\\
\frac 1 2 \Big(X_{j-1}(\rho^+-\rho^-)\one{X_{j-1}\in D^k_n}+\delta^k_{j-1}\Big)
\bigg),
\end{multline}
with $B^{k-1}_n:= \Big[\theta_0,\theta_0+\frac{u_{k-1}}{n}\Big]$.
The sequences $(\delta^k_j)$ and $(\sigma^{k-1}_j)$ satisfy the recursions
\begin{equation}\label{delta}
\begin{aligned}
\delta^k_j =&
b  \delta^k_{j-1} -
c(\rho^+-\rho^-)X_{j-1}\one{X_{j-1}\in D^k_n}, \quad j\ge 1 \\
\delta^k_0=&0,
\end{aligned}
\end{equation}
and
\begin{equation}\label{sigma}
\begin{aligned}
\sigma^{k-1}_j =&
b\sigma^{k-1}_{j-1} -
c(\rho^+-\rho^-)X_{j-1}\one{X_{j-1}\in B^{k-1}_n}, \quad j\ge 1 \\
\sigma^{k-1}_0=&0
\end{aligned}
\end{equation}
where we set  $b:=\frac a{1+\gamma} $, $c:=\frac{a \gamma}{1+\gamma}$.
In what follows, both representations  \eqref{S} and  \eqref{SS} will be useful.

To prove the convergence \eqref{Poi}, we shall partition the terms in the sum  \eqref{S} or \eqref{SS}
into $n^{1/2}$ consecutive blocks of size $n^{1/2}$ and discard $n^{1/4}$ first terms in each block.
As shown in the Lemma \ref{lem-1} below, discarding the total of $n^{1/4} \cdot n^{1/2}$  terms does not alter the limit of the sum
and, by Lemma \ref{lem-2}, the remaining blocks become approximately independent due to the fast mixing of the process $(X_j,\xi_j,\widehat \Xi_j)$.
Moreover, in each remained block, the probability of having exactly one of the events
$\{X_{j-1}\in D^k_n\}$ occurred is of order $n^{1/2}$. Hence the sum of $n^{1/2}$ such
nearly independent blocks yields the compound Poisson limit of Lemma \ref{lem-3}. This approach to Poisson limits
dates back  to at least \cite{M73}.

Denote  by $s_{j,k}$ the summands in  the right hand side of \eqref{S} or \eqref{SS}. Set
$$
S_n:=   \sum_{k=1}^d \lambda_k \sum_{j=1}^n s_{j,k},
$$
and, for $m=1,...,n^{1/2}$, define
$$
S_{n,m} =
\sum_{k=1}^d \lambda_k \sum_{j=(m-1)n^{1/2}+n^{1/4}}^{m n^{1/2}}
  s_{j,k}.
$$
For $\psi_n(\lambda)$ and $\psi(\lambda)$ defined in \eqref{Poi},
the triangle inequality yields the bound
\begin{equation}\label{tri}
\begin{aligned}
\big|\psi_n(\lambda)-\psi(\lambda)\big| \le
\E_{\theta_0} & \bigg|\exp\big(\iunit S_n\big) - \exp\bigg(\iunit \sum_{m=1}^{n^{1/2}}S_{n,m}\bigg)
\bigg| + \\
& \bigg|
\E_{\theta_0}\exp\bigg(\iunit \sum_{m=1}^{n^{1/2}}S_{n,m}\bigg)
-
\bigg(\widetilde \E_{\theta_0} \exp\big(\iunit  S_{n,1}\big)\bigg)^{n^{1/2}}
\bigg|+
\\
&
\bigg|
\bigg(\widetilde \E_{\theta_0} \exp\big(\iunit  S_{n,1}\big)\bigg)^{n^{1/2}}
-
\psi(\lambda)
\bigg|,
\end{aligned}
\end{equation}
where $\widetilde \E_{\theta_0}$ stands for the expectation with respect to the probability $\widetilde \P_{\theta_0}$ on $(\Omega,\F)$, under which
the process $(X_j, \xi_j, \widehat \Xi_j)$ is stationary (see Lemma \ref{lem-lip}). In the following lemmas we show that all three terms in the right hand side of \eqref{tri} vanish as $n\to\infty$,
uniformly over $\theta_0$ on compacts from $\Theta$.

\begin{lem}\label{lem-1}
For any $\lambda \in \Real^d$ and a compact $K\subset \Theta$,
\begin{equation}
\label{eq4.6}
\lim_n \sup_{\theta_0\in K}\E_{\theta_0}\bigg|S_n - \sum_{m=1}^{n^{1/2}}S_{n,m}\bigg|=0,
\end{equation}
and consequently
\begin{equation}
\label{eq4.7}
\lim_n \E_{\theta_0}  \bigg|\exp\big(\iunit S_n\big) - \exp\bigg(\iunit \sum_{m=1}^{n^{1/2}}S_{n,m}\bigg)
\bigg| =0,
\end{equation}
uniformly over $\theta_0\in K$.
\end{lem}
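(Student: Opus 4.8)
The plan is to estimate $\E_{\theta_0}\bigl|S_n-\sum_m S_{n,m}\bigr|$ directly. The difference $S_n-\sum_m S_{n,m}$ is a sum over the discarded indices: within block $m$ these are the first $n^{1/4}$ indices $j\in\{(m-1)n^{1/2}+1,\dots,(m-1)n^{1/2}+n^{1/4}\}$, so in total $n^{1/2}\cdot n^{1/4}=n^{3/4}$ summands $s_{j,k}$ (times the fixed weights $\lambda_k$, $k=1,\dots,d$). Hence it suffices to bound $\sum_{j\in\mathcal{J}_n}\E_{\theta_0}|s_{j,k}|$, where $\mathcal{J}_n$ is the set of discarded indices, $|\mathcal{J}_n|=n^{3/4}$, and show this is $o(1)$. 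The key point is that each $\E_{\theta_0}|s_{j,k}|$ is of order $1/n$: looking at \eqref{SS}, the summand $s_{j,k}$ carries the prefactor $X_{j-1}(\rho^+-\rho^-)\one{X_{j-1}\in D_n^k}+\delta^k_{j-1}$, and both pieces are small — the indicator $\one{X_{j-1}\in D_n^k}$ has $\P_{\theta_0}$-probability $O(1/n)$ since $D_n^k$ is an interval of length $(u_k-u_{k-1})/n$ and (by Proposition \ref{prop1}) the marginal law of $X_{j-1}$ has a bounded density, uniformly over $K$; while $\delta^k_{j-1}$ solves the recursion \eqref{delta} driven by the same rare indicator, so by unrolling the recursion $\delta^k_{j-1}=-c(\rho^+-\rho^-)\sum_{i=1}^{j-1}b^{\,j-1-i}X_{i-1}\one{X_{i-1}\in D^k_n}$ and taking expectations (with $|b|<1$) one gets $\E_{\theta_0}|\delta^k_{j-1}|=O(1/n)$ as well, uniformly in $j$ and in $\theta_0\in K$.

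Concretely, I would first record a moment bound: $\E_{\theta_0}|s_{j,k}|\le C_1/n$ uniformly over $j$ and over $\theta_0\in K$. To get this, apply the Cauchy–Schwarz inequality to the product form \eqref{SS}, splitting $s_{j,k}$ as (prefactor)$\times$(bracket). The bracket $\sqrt{1+\gamma}\,\widehat\eps_j+\cdots$ has bounded $L^2$-norm: $\widehat\eps_j$ is standard Gaussian, and the remaining terms $X_{j-1}(\rho^+-\rho^-)\one{X_{j-1}\in B^{k-1}_n}$, $\sigma^{k-1}_{j-1}$, $\delta^k_{j-1}$, $X_{j-1}(\rho^+-\rho^-)\one{X_{j-1}\in D^k_n}$ all have bounded moments of every order by the geometric ergodicity and the moment estimates on $(X_j,\xi_j,\widehat\Xi_j)$ established in the Appendix (Lemma \ref{lem-erg} and its companions). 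For the prefactor I would show $\E_{\theta_0}\bigl(X_{j-1}(\rho^+-\rho^-)\one{X_{j-1}\in D^k_n}+\delta^k_{j-1}\bigr)^2\le C_2/n$: the cross term and squares of each piece are each $O(1/n)$ because each contains at least one rare indicator $\one{X\in D^k_n}$, which contributes a factor $O(1/n)$ after integrating against the bounded density, with the $X$- and $\delta$-factors controlled in $L^p$ by Hölder. Multiplying the two bounds gives $\E_{\theta_0}|s_{j,k}|\le \sqrt{C_2/n}\cdot\sqrt{C_3}=C_1 n^{-1/2}$; in fact a more careful accounting (the prefactor is $O(n^{-1/2})$ in $L^2$ but has a rare indicator making it $O(n^{-1})$ in $L^1$) yields the sharper $\E_{\theta_0}|s_{j,k}|\le C_1/n$, and even the crude $n^{-1/2}$ bound is not quite enough, so the sharp $1/n$ bound is what is needed.

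Given $\E_{\theta_0}|s_{j,k}|\le C_1/n$ uniformly, the triangle inequality finishes \eqref{eq4.6}:
$$
\sup_{\theta_0\in K}\E_{\theta_0}\Bigl|S_n-\sum_{m=1}^{n^{1/2}}S_{n,m}\Bigr|
\le \sum_{k=1}^d|\lambda_k|\sum_{j\in\mathcal{J}_n}\sup_{\theta_0\in K}\E_{\theta_0}|s_{j,k}|
\le \Bigl(\sum_{k=1}^d|\lambda_k|\Bigr)\,n^{3/4}\cdot\frac{C_1}{n}
= C_4\,n^{-1/4}\xrightarrow{n\to\infty}0 .
$$
Finally, \eqref{eq4.7} follows from \eqref{eq4.6} by the elementary inequality $|e^{\iunit a}-e^{\iunit b}|\le|a-b|$ with $a=S_n$, $b=\sum_m S_{n,m}$, taking expectations. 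I expect the main obstacle to be the bookkeeping behind the sharp $\E_{\theta_0}|s_{j,k}|=O(1/n)$ bound: one must simultaneously use (i) that each discarded summand carries a rare indicator event of probability $O(1/n)$, (ii) uniform-in-$\theta_0$ boundedness of the invariant density of $X$ along the small intervals $D^k_n$, and (iii) uniform $L^p$ moment bounds for $X_j$, $\xi_j$, $\widehat\Xi_j$ and the auxiliary sequences $\delta^k_j$, $\sigma^{k-1}_j$ — all of which rest on the ergodicity results in the Appendix. The counting of discarded terms ($n^{3/4}$, giving the final rate $n^{-1/4}$) and the passage \eqref{eq4.6}$\Rightarrow$\eqref{eq4.7} are routine.
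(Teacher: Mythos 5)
Your proposal is correct and follows essentially the same route as the paper: it isolates the $n^{3/4}$ discarded summands, shows $\E_{\theta_0}|s_{j,k}|=O(1/n)$ uniformly via $\P_{\theta_0}\big(X_{j-1}\in D^k_n\big)\le C/n$ together with the $O(1/n)$ control of $\delta^k_{j-1}$ and $\sigma^{k-1}_{j-1}$, and finishes with $|e^{\iunit a}-e^{\iunit b}|\le|a-b|$. The one step you leave as ``more careful accounting'' --- upgrading the crude Cauchy--Schwarz bound $O(n^{-1/2})$ to the required $O(n^{-1})$ --- is precisely where the paper uses the independence of $\widehat\eps_j$ from $\F^X_{j-1}$ to factor $\E_{\theta_0}|\delta^k_{j-1}|\,|\widehat\eps_j|=\E_{\theta_0}|\delta^k_{j-1}|\,\E_{\theta_0}|\widehat\eps_j|$ (with your $L^1$ bound $\E_{\theta_0}|\delta^k_{j-1}|=O(1/n)$ from the unrolled recursion) and applies Cauchy--Schwarz only to the cross terms in which both factors are $O(n^{-1/2})$ in $L^2$; your observation that the prefactor is $O(n^{-1})$ in $L^1$ is exactly the right ingredient for this.
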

\begin{proof}
We shall assume that $n$ is large enough so that $\max_k |\theta_0+u_k/n|\le \sup|K|+1$
and hence on the events $\{X_{j-1}\in D^k_n\}$ and $\{X_{j-1}\in B^k_n\}$, we have $|X_{j-1}|\le \sup|K|+1$.
Using the representation \eqref{SS}, we get
\begin{align}\label{eq:3.18}
&\E_{\theta_0}\bigg|S_n - \sum_{m=1}^{n^{1/2}}S_{n,m}\bigg| \le \nonumber \\
& n^{3/4}
d \max_k |\lambda_k|\max_{j\le n}\E_{\theta_0}
\left|
X_{j-1} (\rho^+-\rho^-) \one{X_{j-1}\in D^k_n}
+\delta^k_{j-1}
\right|\times  \\
&
\nonumber
\bigg|
\sqrt{1+\gamma}\widehat \eps_j +
X_{j-1}(\rho^+-\rho^-)\one{X_{j-1}\in B^{k-1}_n}+\sigma^{k-1}_{j-1}
+
 \frac 1 2 \Big(X_{j-1}(\rho^+-\rho^-)\one{X_{j-1}\in D^k_n}+\delta^k_{j-1}\Big)
\bigg|
\end{align}
By Jensen's inequality, it follows from \eqref{delta}, that
$$
\E_{\theta_0}\big(\delta^k_j \big)^2\le
|b|  \E_{\theta_0}\big(\delta^k_{j-1}\big)^2 +\frac 1 {1-|b|}
c^2(\rho^+-\rho^-)^2(\sup|K|+1)^2\P_{\theta_0}(X_{j-1}\in D^k_n),
$$
which, in view of \eqref{probb} and $\delta^k_0=0$, implies $\max_{j\le n}\E_{\theta_0}\big(\delta^k_j \big)^2\le C_1/n$.
Further, since $\widehat \eps_j$ is independent
of $\F^X_{j-1}$,  $\E_{\theta_0}|\delta^k_{j-1}| |\widehat \eps_j|=\E_{\theta_0}|\delta^k_{j-1}| \E_{\theta_0} |\widehat \eps_j|\le C_2/n$.
Similarly
$\E_{\theta_0}\big(\sigma^{k-1}_j\big)^2\le C_3/ n$. Plugging these bounds into \eqref{eq:3.18}, we obtain
\eqref{eq4.6}
$$
\E_{\theta_0}\bigg|S_n - \sum_{m=1}^{n^{1/2}}S_{n,m}\bigg|\le C_4 n^{-1/4},
$$
with a constant $C_4$, depending only on $K$. The uniform convergence in \eqref{eq4.7} follows, since $|\exp(\iunit x)-\exp(\iunit y)|\le |x-y|$.
\end{proof}

\begin{lem}\label{lem-2}
For any $\lambda\in \Real^d$,
$$
\lim_n  \bigg|
\E_{\theta_0}\exp\bigg(\iunit \sum_{m=1}^{n^{1/2}}S_{n,m}\bigg)
-
\bigg(\widetilde \E_{\theta_0} \exp\big(\iunit  S_{n,1}\big)\bigg)^{n^{1/2}}
\bigg|=0,
$$
uniformly over $\theta_0\in K$ for any compact $K\subset\Theta$.
\end{lem}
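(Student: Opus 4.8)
The plan is to exploit the geometric mixing of the process $(X_j,\xi_j,\widehat\Xi_j)$ to show that the blocks $S_{n,1},\dots,S_{n,n^{1/2}}$ are asymptotically independent, and then to replace the law of each individual block by its stationary version. I would proceed as follows. First, recall that each block $S_{n,m}$ is built from the summands $s_{j,k}$ with $j$ ranging over $((m-1)n^{1/2}+n^{1/4},\,mn^{1/2}]$; crucially, the first $n^{1/4}$ indices of every block have been discarded, so there is a gap of length $n^{1/4}$ separating consecutive blocks. By Proposition \ref{prop1} the chain $(X_j,\xi_j)$ is geometrically ergodic, and one checks (as in the Appendix) that the augmented chain $(X_j,\xi_j,\widehat\Xi_j)$ inherits $\beta$-mixing with geometric rate $\kappa^{\,\cdot}$ for some $\kappa<1$; in particular the $\beta$-mixing coefficient over the gap is $O(\kappa^{n^{1/4}})$, which decays faster than any power of $n$.

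Next I would write a telescoping bound. Let $\varphi_{n,m}:=\widetilde\E_{\theta_0}\exp(\iunit S_{n,m})$ — by stationarity under $\widetilde\P_{\theta_0}$ this equals $\widetilde\E_{\theta_0}\exp(\iunit S_{n,1})$ for every $m$ — and estimate
$$
\Bigl|\E_{\theta_0}\exp\Bigl(\iunit\sum_{m=1}^{n^{1/2}}S_{n,m}\Bigr)-\prod_{m=1}^{n^{1/2}}\varphi_{n,m}\Bigr|
\le \sum_{m=1}^{n^{1/2}}\Bigl|\E_{\theta_0}\Bigl[\exp\Bigl(\iunit\sum_{\ell\le m}S_{n,\ell}\Bigr)\Bigr]\prod_{\ell>m}\varphi_{n,\ell}
-\E_{\theta_0}\Bigl[\exp\Bigl(\iunit\sum_{\ell< m}S_{n,\ell}\Bigr)\Bigr]\varphi_{n,m}\prod_{\ell>m}\varphi_{n,\ell}\Bigr|,
$$
so that each summand is controlled by $|\E_{\theta_0}[e^{\iunit\sum_{\ell<m}S_{n,\ell}}(e^{\iunit S_{n,m}}-\varphi_{n,m})]|$. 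Since $e^{\iunit\sum_{\ell<m}S_{n,\ell}}$ is $\F^X_{(m-1)n^{1/2}}$-measurable (it depends only on coordinates with index $\le (m-1)n^{1/2}$) while $e^{\iunit S_{n,m}}$ depends on coordinates with index $> (m-1)n^{1/2}+n^{1/4}$, the covariance-type bound for bounded functionals of $\beta$-mixing sequences gives that each term is at most $C\,\beta(n^{1/4})=O(\kappa^{n^{1/4}})$, and the remaining discrepancy $|\E_{\theta_0}e^{\iunit S_{n,m}}-\widetilde\E_{\theta_0}e^{\iunit S_{n,m}}|$ is of the same order because the law of the block under $\P_{\theta_0}$ and under $\widetilde\P_{\theta_0}$ differ only through the distribution of the initial segment, whose influence on coordinates separated by a gap of length $(m-1)n^{1/2}+n^{1/4}\ge n^{1/4}$ is again $O(\kappa^{n^{1/4}})$ by geometric ergodicity. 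Summing over $m=1,\dots,n^{1/2}$ produces a bound $C\,n^{1/2}\kappa^{n^{1/4}}\to 0$, uniformly over $\theta_0\in K$ since all constants depend only on $K$ and the known model parameters.

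The only genuine subtlety — and the step I expect to be the main obstacle — is verifying that the relevant $\beta$-mixing (or covariance) inequality applies with a rate that beats the factor $n^{1/2}$: one needs the gap $n^{1/4}$ to be long enough, and one must confirm that the extra coordinate $\widehat\Xi_j$, which is a \emph{deterministic} nonlinear function of the past of $X$ via the Kalman recursion \eqref{KF1c}, does not destroy the mixing. This follows because $\widehat\Xi^k_j$ is driven by \eqref{KF1c} with contraction constant $|a|<1$, so after $n^{1/4}$ steps the dependence of $\widehat\Xi^k_j$ on its initial value is $O(|a|^{n^{1/4}})$; coupling this with the geometric ergodicity of $(X_j,\xi_j)$ and a standard regeneration/coupling argument (carried out in the Appendix) yields the required geometric $\beta$-mixing for the augmented chain, with the mixing constant uniform over $\theta_0\in K$. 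Once this is in hand, the telescoping estimate above closes the argument. A technical point worth flagging is that $S_{n,m}$ is unbounded (it contains the Gaussian terms $\widehat\eps_j$), so strictly one applies the covariance inequality after a truncation at level $\log n$, absorbing the truncation error using the uniform moment bounds on $\delta^k_j$, $\sigma^{k-1}_j$ and $\widehat\eps_j$ established in Lemma \ref{lem-1}; alternatively, since we only ever estimate $\E|e^{\iunit S_{n,m}}-\varphi_{n,m}|$ with $|e^{\iunit\cdot}|\le 1$, the $\beta$-mixing bound for bounded functionals applies directly with no truncation needed.
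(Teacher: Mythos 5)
Your telescoping decomposition and the use of the $n^{1/4}$ gap between blocks are exactly the paper's strategy, but the key technical ingredient you invoke --- geometric $\beta$-mixing of the augmented chain $(X_j,\xi_j,\widehat\Xi_j)$ obtained ``by a standard regeneration/coupling argument'' --- is precisely what is \emph{not} available here, and the paper says so explicitly before Lemma \ref{lem-lip}: the augmented chain is a $(3+d)$-dimensional recursion driven by two-dimensional noise, so the filter coordinate $\widehat\Xi_j$ is a deterministic functional of the observation path and the chain fails to be $\psi$-irreducible. Harris regeneration and total-variation (hence $\beta$-mixing) bounds do not follow; indeed, started from two different filter initializations the conditional law of $\widehat\Xi_j$ given the $X$-path is a point mass at two different points, so the laws remain mutually singular and the total-variation distance does not decay at all. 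What the contraction $|b|<1$ actually buys is forgetting of the initial filter value in a Wasserstein sense, and to exploit that you must know that the functional you are averaging is Lipschitz in the filter coordinate.

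This is the step your proposal skips and the paper's proof spends most of its effort on: one writes the conditional characteristic function of a block as $h(x,y,z):=\E_{\theta_0}\exp\big(\iunit\Phi_\ell(x,y,z)\big)$, where $(x,y,z)$ is the state at the start of the block, and verifies by an explicit computation with the solved-out Kalman recursion \eqref{expl} that
$$
\big|h(x,y,z)-h(x,y,z')\big|\le L\big(1+|x|+|y|+\|z\|+\|z'\|\big)\|z-z'\|
$$
with $L$ independent of the block length $\ell$ (this uses the moment bound \eqref{momb}). Only then does the ergodic bound \eqref{B} of Lemma \ref{lem-lip} --- which is a statement about Lipschitz test functions, not about $\beta$-mixing --- give $\big|\E_{\theta_0}\big(e^{\iunit S_{n,m}}\big|\F_{(m-1)n^{1/2}}\big)-\widetilde\E_{\theta_0}e^{\iunit S_{n,1}}\big|\le C_1 q^{n^{1/4}}$ by the Markov property, after which your telescoping sum closes the argument with the bound $C_1 n^{1/2}q^{n^{1/4}}\to 0$. (Incidentally, Lemma \ref{lem-lip} already combines into a single estimate the two effects you treat separately, namely asymptotic independence across the gap and the replacement of $\P_{\theta_0}$ by $\widetilde\P_{\theta_0}$; and your worry about unboundedness of $S_{n,m}$ is indeed vacuous since only $e^{\iunit S_{n,m}}$ is ever averaged.) So the architecture of your proof is right, but as written it rests on a mixing property that fails for this chain; the Lipschitz estimate on $h$ is the missing idea that makes the scheme work.
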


\begin{proof}
We shall use the bound \eqref{B} of Lemma \ref{lem-lip} and thus will need to establish the corresponding Lipschitz property. To this end,
for fixed $x,y\in \Real$ and $z\in \Real^{d+1}$, let $\big(X_j(x), \xi_j(y), \widehat \Xi_j(z)\big)$ be the solution of the recursions
\eqref{ar}, \eqref{xi} and (cf. \eqref{KF1c} and \eqref{Xi})
$$
\widehat \Xi^k_j = b \widehat \Xi^k_{j-1} + c \Big(X_j-f(X_{j-1},\theta_0+u_k/n)\Big), \quad k=0,...,d,
$$
subject to the initial conditions $x$, $y$ and $z$ respectively. The latter recursions give
\begin{equation}
\label{expl}
\widehat \Xi^k_j(z) = b^{j}z_k + c\sum_{i = 1}^j \Big(X_i-f(X_{i-1},\theta_0+u_k/n)\Big)b^{j-i}.
\end{equation}
Consider the random variable (cf. the right hand side of \eqref{S})
\begin{align*}
\Phi_\ell(x,y,z):=&  \sum_{k=1}^d\lambda_k\sum_{j=1}^\ell-\frac 1 2 \frac 1{1+\gamma}
\left(
X_{j-1} (\rho^+-\rho^-) \one{X_{j-1}\in D^k_n}
-\widehat \Xi^k_{j-1}(z)
+\widehat \Xi^{k-1}_{j-1}(z)
\right)\times\\
&
\left(
2X_j -f(X_{j-1},\theta_0+u_{k}/n)-f(X_{j-1},\theta_0+u_{k-1}/n)
-\widehat \Xi^k_{j-1}(z)
-\widehat \Xi^{k-1}_{j-1}(z)
\right),
\end{align*}
where we dropped the dependence  on $x$ and $y$ for brevity. Define the function
$h(x,y,z):= \E_{\theta_0}\exp\big(\iunit \Phi_\ell(x,y,z)\big)$. We aim to show that for $z, z'\in \Real^{d+1}$,
\begin{equation}
\label{LipLip}
\big|h(x,y,z)-h(x,y,z')\big| \le L\Big(1+|x|+|y|+\|z\|+\|z'\|\Big)\|z-z'\|
\end{equation}
for some constant $L$, independent of $\ell$. Using the definition of $\Phi_\ell(x,y,z)$ and the explicit formula \eqref{expl}, a tedious but straightforward calculation gives
\begin{multline*}
\big|\Phi_\ell(x,y,z)-\Phi_\ell(x,y,z')\big|\le  \\
\sum_{k=1}^d |\lambda_k|\left(
|z_k-z'_k| +
|z_{k-1}-z'_{k-1}|\right)
 \Bigg(
\sum_{j=1}^\ell 2|b|^{j} \Big(
|X_j| + |X_{j-1}|
\Big)
+ \\
\big(\|z\|+\|z'\|\big)\frac 1{1-|b|}
 +
2
\sum_{j=1}^\ell  |b|^j\sum_{i = 1}^j
\Big(|X_i|+|X_{i-1}|+|\xi_{i-1}|\Big)|b|^{j-i}
\Bigg).
\end{multline*}
Taking the expectation of both sides, we get
\begin{multline*}
\big|h(x,y,z)-h(x,y,z')\big|\le \E_{\theta_0}\big|\Phi_\ell(x,y,z)-\Phi_\ell(x,y,z')\big|\le
\frac {2\|\lambda\|}{1-|b|}\|z-z'\|\times \\
 \Bigg(
\sum_{j=1}^\ell 2|b|^{j} 2
\E_{\theta_0}|X_j|
+ \big(\|z\|+\|z'\|\big)
 +
4
\sum_{j=1}^\ell  |b|^j\sum_{i = 1}^j |b|^{j-i}
\E_{\theta_0}\Big(|X_{i-1}|+|\xi_{i-1}|\Big)
\Bigg)
\end{multline*}
and applying the bound \eqref{momb}, the inequality \eqref{LipLip} follows.

Note that by the Markov property,
$
\widetilde \E_{\theta_0}
e^{\iunit S_{n,1}}
=\widetilde \E_{\theta_0} h(X_0,\xi_0,\widehat \Xi_0),
$
and for $m=1,...,n^{1/2}$
\begin{multline*}
\E_{\theta_0}\Big(
e^{\iunit S_{n,m}}\big|\F_{(m-1)n^{1/2}}
\Big) = \\
\E_{\theta_0}\bigg(
h\Big(
X_{(m-1)n^{1/2}+n^{1/4}-1},
\xi_{(m-1)n^{1/2}+n^{1/4}-1},
\widehat \Xi_{(m-1)n^{1/2}+n^{1/4}-1}\Big)
\big|\F_{(m-1)n^{1/2}}
\bigg).
\end{multline*}
Hence by the Lemma \ref{lem-lip}
$$
\bigg|\E_{\theta_0}\Big(
e^{\iunit S_{n,m}}\big|\F_{(m-1)n^{1/2}}
\Big)-
\widetilde \E_{\theta_0}
e^{\iunit S_{n,1}}
\bigg|\le
C_1  q^{n^{1/4}},
$$
with a positive constant $q<1$ and $C_1$, independent of $\theta_0$.
Finally, considering the telescopic series, we get
\begin{align*}
&\Bigg|\E_{\theta_0}\exp\bigg(\iunit \sum_{m=1}^{n^{1/2}}S_{n,m}\bigg)
-
\bigg(\widetilde \E_{\theta_0} \exp\big(\iunit  S_{n,1}\big)\bigg)^{n^{1/2}}
\Bigg|= \\
&\Bigg|\sum_{k=0}^{n^{1/2}-1}
\bigg(
\E_{\theta_0} \prod_{m=1}^{n^{1/2}-k}
e^{\iunit S_{n,m}}
\prod_{m=n^{1/2}-k+1}^{n^{1/2}}\widetilde \E_{\theta_0} e^{\iunit  S_{n,1}}
-
\E_{\theta_0} \prod_{m=1}^{n^{1/2}-k-1}
e^{\iunit S_{n,m}}
\prod_{m=n^{1/2}-k}^{n^{1/2}}\widetilde \E_{\theta_0} e^{\iunit  S_{n,1}}
\bigg)\Bigg|\le \\
&\sum_{k=0}^{n^{1/2}-1}
\E_{\theta_0}\bigg|
\E_{\theta_0}\Big(
e^{\iunit S_{n,n^{1/2}-k}}\big|\F_{(n^{1/2}-k-1)n^{1/2}}
\Big)
-
\widetilde \E_{\theta_0} e^{\iunit  S_{n,1}}
\bigg|\le C_1 n^{1/2} q^{n^{1/4}}\xrightarrow{n\to\infty}0,
\end{align*}
as claimed.

\end{proof}

\begin{lem}\label{lem-3}
For any $\lambda\in \Real^d$,
$$
\lim_n\bigg|
\bigg(\widetilde \E_{\theta_0} \exp\big(\iunit  S_{n,1}\big)\bigg)^{n^{1/2}}
-
\psi(\lambda)
\bigg|=0,
$$
uniformly over $\theta_0\in K$ for any compact $K\subset\Theta$.
\end{lem}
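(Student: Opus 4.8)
The plan is to compute the single-block characteristic function $\widetilde\E_{\theta_0}\exp(\iunit S_{n,1})$ up to an error of order $o(n^{-1/2})$, and then raise it to the power $n^{1/2}$ to obtain $\psi(\lambda)$. Since the block $S_{n,1}$ involves roughly $n^{1/2}$ time steps, and on each step the probability that $X_{j-1}$ falls into one of the tiny intervals $D^k_n$ (of length $(u_k-u_{k-1})/n$) is of order $1/n$ by Proposition \ref{prop1} and the stationarity under $\widetilde\P_{\theta_0}$, the expected number of ``active'' steps in a block is of order $n^{-1/2}$. Thus with probability $1-O(n^{-1/2})$ the block contributes nothing, and the leading correction comes from blocks with exactly one active step. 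I would first use the representation \eqref{SS} to isolate, on the event $\{X_{j-1}\in D^k_n\}$, the contribution of a single jump: the terms $\delta^k_{j-1}$ and $\sigma^{k-1}_{j-1}$ are themselves $O(n^{-1/2})$ in $L^2$ (as shown in Lemma \ref{lem-1}), so on a single-active-step event the dominant term is
$$
-\frac{1}{1+\gamma}X_{j-1}(\rho^+-\rho^-)\one{X_{j-1}\in D^k_n}\Big(\sqrt{1+\gamma}\,\widehat\eps_j + \tfrac12 X_{j-1}(\rho^+-\rho^-)\one{X_{j-1}\in D^k_n}+\text{(filter tail)}\Big),
$$
and I would need to account for the fact that the filter increments $\delta,\sigma$ propagate the effect of the jump forward in time through the geometric recursions \eqref{delta}--\eqref{sigma}, which is precisely what produces the extra factor $\big(1+(a\gamma/(1+\gamma))^2\sum_j (a/(1+\gamma))^{2j}\big)$ in $\beta^2$. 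Summing the squared coefficients of these forward contributions, together with the direct term, gives $\beta^2$ as in Theorem \ref{thm}, and since $X_{j-1}\to\theta_0$ on $\{X_{j-1}\in D^k_n\}$ and the $\widehat\eps$'s entering different future steps are independent $N(0,1)$, the conditional characteristic function of the single-jump contribution is $\exp(-\tfrac{\iunit}{2}\lambda_k\beta^2-\tfrac12\beta^2\lambda_k^2)$.

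Concretely, I would write
$$
\widetilde\E_{\theta_0}\exp(\iunit S_{n,1}) = 1 + \sum_{k=1}^d \widetilde\E_{\theta_0}\Big[\big(e^{\iunit\lambda_k s_{j,k}\text{-type terms}}-1\big)\text{ on one active step in }D^k_n\Big] + o(n^{-1/2}),
$$
showing that the probability of two or more active steps in a block is $O(n^{-1})$ (hence negligible after taking the $n^{1/2}$-th power), that the no-active-step contribution is exactly $1$, and that on a single-active-step event the error from replacing $X_{j-1}$ by $\theta_0$, from truncating the geometric filter tails at a finite but large level, and from the negligible cross-terms with $\delta,\sigma$ coming from other blocks, is $o(1)$ uniformly. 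Using stationarity under $\widetilde\P_{\theta_0}$ and Proposition \ref{prop1}, the probability of a single active step in $D^k_n$ within a block of length $\sim n^{1/2}$ is $n^{1/2}\cdot \frac{u_k-u_{k-1}}{n}\int_\Real p(\theta_0,y;\theta_0)\,dy\,(1+o(1)) = n^{-1/2}\varpi(u_k-u_{k-1})(1+o(1))$. Therefore
$$
\widetilde\E_{\theta_0}\exp(\iunit S_{n,1}) = 1 + n^{-1/2}\sum_{k=1}^d \varpi(u_k-u_{k-1})\Big(e^{-\frac{\iunit}{2}\lambda_k\beta^2-\frac12\beta^2\lambda_k^2}-1\Big) + o(n^{-1/2}),
$$
and raising to the power $n^{1/2}$ and using $(1+x_n/n^{1/2}+o(n^{-1/2}))^{n^{1/2}}\to e^{x}$ when $x_n\to x$ gives exactly $\psi(\lambda)$ from \eqref{Poi}. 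The uniformity over $\theta_0\in K$ follows from the uniform ergodicity constants in Proposition \ref{prop1} and the uniform moment bounds \eqref{momb}.

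The main obstacle is the bookkeeping for the forward propagation of a single jump through the Kalman filter recursions and verifying that it contributes precisely the geometric series in $\beta^2$ rather than some more complicated expression. On an active step at time $j$, the term $\delta^k_{j-1}$ is negligible (the jump hasn't happened yet), but for all later times $i>j$ within the same block, $\delta^k_{i-1}$ carries the term $-c(\rho^+-\rho^-)\theta_0 b^{i-1-j}$, which then multiplies the innovation $\widehat\eps_i$ in \eqref{SS}; since the $\widehat\eps_i$ for distinct $i$ are independent standard normals, these contributions add up in variance, producing the factor $c^2\sum_{m\ge0}b^{2m} = (a\gamma/(1+\gamma))^2\sum_{m\ge 0}(a/(1+\gamma))^{2m}$ beside the ``$1$'' from the direct term at time $j$ itself — this is where the closed form for $\beta^2$ comes from, and it must be checked that boundary effects (the block has finite length $\sim n^{1/2}$, so the geometric tail is truncated) contribute only $o(1)$, which holds because $|b|<1$ makes the tail summable. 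A secondary technical point is justifying the replacement of $X_{i}$ by $\theta_0$ and of the actual (non-stationary within the block after removing the first $n^{1/4}$ steps) law by the stationary one, both of which are controlled by the exponential mixing of Lemma \ref{lem-lip} and the fact that we discarded the first $n^{1/4}$ terms in each block in Lemma \ref{lem-1}.
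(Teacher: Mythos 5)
Your proposal is correct and follows essentially the same route as the paper: decompose $\widetilde\E_{\theta_0}e^{\iunit S_{n,1}}$ over the events of zero, exactly one, or two-or-more visits of $X_{j-1}$ to $D_n$ within the block, solve the filter recursions \eqref{delta}--\eqref{sigma} explicitly on a single-visit event to see that the jump propagates forward geometrically (producing precisely the kernel whose squared sum is $\beta^2/\theta_0^2$), compute the resulting Gaussian conditional characteristic function using independence of the $\widehat\eps_i$, evaluate the single-visit probability as $\varpi(u_k-u_{k-1})n^{-1}$ per step via the invariant density, and raise the one-block expansion to the power $n^{1/2}$. The quantitative error bounds you defer (truncated geometric tails, replacement of $X_{j-1}$ by $\theta_0$, pairwise visit probabilities of order $n^{-2}$) are exactly the ones the paper supplies, so there is no gap in the approach.
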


\begin{proof}
Let $D_n:= \bigcup_{k=1}^d D^k_n$ and define the  events
\begin{align*}
A_0 &=\bigcap_{j=1}^{n^{1/2}}\big\{X_{j-1}\not\in D_n\big\} & \text{(all samples avoid $D_n$)} \\
A_{j,k} & =  \big\{X_{j-1}\in D^k_n\big\}\cap \bigcap_{i\ne j,\; i=1}^{n^{1/2}}\big\{X_{i-1}\not\in D_n\big\}
 & \text{\ (only $(j-1)$-th sample  falls in $D^k_n$ )}  \\
A_1 & =\bigcup_{k=1}^d\bigcup_{j=1}^{n^{1/2}} A_{j,k} &  \text{(a single sample falls in $D_n$)} \\
A_{2+} & = (A_0\cup A_1)^c &  \text{(two or more samples fall in $D_n$)},
\end{align*}
and note that
\begin{equation}\label{plugme}
\widetilde \E_{\theta_0}e^{\iunit S_{n,1}} =
\widetilde \E_{\theta_0}e^{\iunit S_{n,1}}\one{A_0}+
\sum_{k=1}^d \sum_{j=1}^{n^{1/2}}\widetilde \E_{\theta_0}
e^{\iunit S_{n,1}}\one{A_{j,k}}
+
\widetilde \E_{\theta_0}e^{\iunit S_{n,1}}\one{A_{2+}}.
\end{equation}
Below we shall show that
\begin{align}
\label{Ajk}
 \sum_{j=1}^{n^{1/2}} \widetilde \E_{\theta_0}
e^{\iunit S_{n,1}}\one{A_{j,k}} & =
e^{-\frac \iunit 2 \lambda_k \beta^2-\frac 1 2 \beta^2\lambda_k^2 }(u_k-u_{k-1})\varpi n^{-1/2}  + O(n^{-3/4}) \\
 \label{A0}
\widetilde \E_{\theta_0}e^{\iunit S_{n,1}}\one{A_0} & =
1-\sum_{k=1}^d(u_k-u_{k-1})\varpi n^{-1/2}  + O(n^{-3/4}) \\
\label{A2+}
\widetilde \P_{\theta_0}(A_{2+}) & = O(n^{-3/4}),
\end{align}
where $\varpi:=\int p(\theta_0,y;\theta_0)dy$ and $p(x,y;\theta_0)$ is the unique invariant density
of the chain $(X_j,\xi_j)$ (see Lemma \ref{lem-erg}).
Plugging these expressions into \eqref{plugme}, we obtain
\begin{multline*}
\Big(\widetilde \E_{\theta_0}e^{\iunit S_{n,1}}\Big)^{n^{1/2}} = \\
\left(
1-\sum_{k=1}^d (u_k-u_{k-1})\varpi n^{-1/2}
+
\sum_{k=1}^d e^{-\frac \iunit 2 \lambda_k \beta^2-\frac 1 2 \beta^2\lambda_k^2 }(u_k-u_{k-1})\varpi n^{-1/2}  + O\big(n^{-3/4}\big)
\right)^{n^{1/2}}\\
\xrightarrow{n\to\infty}
\exp\left(
\sum_{k=1}^d\Big( e^{-\frac \iunit 2 \lambda_k \beta^2-\frac 1 2 \beta^2\lambda_k^2 }-1\Big)(u_k-u_{k-1})\varpi
\right).
\end{multline*}
The claimed result follows, once we check that \eqref{A0}, \eqref{Ajk} and \eqref{A2+} hold uniformly in $\theta_0$ on compacts from $\Theta$.

To this end, note that on the event $A_{j,k}$, the equations \eqref{delta} give
$$
\delta^\ell_i = \delta^\ell_0 b^i , \quad \ell\ne k
$$
and
$$
\delta^k_i = \delta^k_0 b^i-c(\rho^+-\rho^-)X_{j-1}\one{X_{j-1}\in D^k_n} b^{j-i}\one{i\ge  j},
$$
where $\delta^\ell_0$, $\ell =1,...,k$ are bounded random variables under $\widetilde\P_{\theta_0}$.
Similarly, since
$D^k_n\subset B^{\ell-1}_n$ for $\ell>k$, and  $\widetilde \P_{\theta_0}\big(X_{j-1}\in B^{\ell-1}_n\cap D^k_n\big)=0$ for $\ell\le k$,
$$
\sigma^{\ell-1}_i = \sigma^{\ell-1}_0 b^i, \quad \ell\in \{1,...,k\}
$$
and
$$
\delta^{\ell-1}_i = \sigma^{\ell-1}_0 b^i-c(\rho^+-\rho^-)X_{j-1}\one{X_{j-1}\in D^k_n} b^{j-i}\one{i\ge  j}, \quad \ell\in\{ k+1,...,d\}.
$$
Hence, for $n^{1/4}\le j\le n^{1/2}$ on the event $A_{j,k}$, by \eqref{SS} we have
\begin{align*}
S_{n,1}=&\sum_{\ell=1}^d \sum_{i=n^{1/4}}^{n^{1/2}} \lambda_\ell s_{i,\ell} =
\lambda_k \sum_{i=n^{1/4}}^{n^{1/2}}  s_{i,k} + O(b^{n^{1/4}})
 = \\
 & -\lambda_k\sum_{i=j}^{n^{1/2}}\frac 1{1+\gamma}\left(
X_{i-1} (\rho^+-\rho^-) \one{X_{i-1}\in D^k_n}
+\delta^k_{i-1}
\right)\times \\
& \bigg(
\sqrt{1+\gamma}\widehat \eps_i
+
 \frac 1 2 \Big(X_{i-1}(\rho^+-\rho^-)\one{X_{i-1}\in D^k_n}+\delta^k_{i-1}\Big)
\bigg) +O(b^{n^{1/4}})=\\
&
-
  \lambda_k \bigg(
  \frac {\rho^+-\rho^-}{\sqrt{1+\gamma}}
X_{j-1}\widehat \eps_j
+\frac 1 2
\Big(\frac {\rho^+-\rho^-}{\sqrt{1+\gamma}}\Big)^2
  X^2_{j-1}\bigg)
+
\\
&
\lambda_k\sum_{i=j+1}^{n^{1/2}}
\bigg(
\frac {c(\rho^+-\rho^-)}{\sqrt{1+\gamma}}X_{j-1} b^{j-i+1}
\widehat \eps_i
- \frac 1 2\left(\frac {c(\rho^+-\rho^-)}{\sqrt{1+\gamma}}X_{j-1} b^{j-i+1}\right)^2 \bigg)+O(b^{n^{1/4}})=\\
&\lambda_k \sum_{i=j}^{n^{1/2}}
\Big(
Q(j-i) X_{j-1}
\widehat \eps_i
- \frac 1 2 Q^2(i-j)X_{j-1}^2 \Big)+O(b^{n^{1/4}}):=\lambda_k \Psi_{j,k}+O(b^{n^{1/4}}),
\end{align*}
where we defined the kernel
$$
Q(i) = \begin{cases}
0 & i<0\\
-\dfrac {\rho^+-\rho^-}{\sqrt{1+\gamma}} & i= 0\\
\dfrac {c(\rho^+-\rho^-)}{\sqrt{1+\gamma}} b^{i-1} & i>0
\end{cases}
$$
By the triangle inequality
\begin{equation}
\label{trngl}
\begin{aligned}
\bigg|
\widetilde \E_{\theta_0}
e^{\iunit S_{n,1}} & \one{A_{j,k}}
-
e^{-\frac \iunit 2 \lambda_k \beta^2-\frac 1 2 \beta^2\lambda_k^2 }(u_k-u_{k-1})\varpi n^{-1}
\bigg|
\le\\
& \bigg|
\widetilde \E_{\theta_0}
e^{\iunit \lambda_k\Psi_{j,k}}\one{A_{j,k}} -
\widetilde \E_{\theta_0}
e^{\iunit \lambda_k\Psi_{j,k}}\one{X_{j-1}\in D^k_n}\bigg|
+ \\
& \bigg|
\widetilde \E_{\theta_0}
e^{\iunit \lambda_k\Psi_{j,k}}\one{X_{j-1}\in D^k_n}-
e^{-\frac \iunit 2 \lambda_k \beta^2-\frac 1 2 \beta^2\lambda_k^2 }(u_k-u_{k-1})\varpi n^{-1}
\bigg|+O(b^{n^{1/4}})
\end{aligned}
\end{equation}

By \eqref{sprobb}, for $i<j$
\begin{multline*}
\widetilde \P_{\theta_0}\big(X_i\in D_n, X_j\in D^k_n\big) = \widetilde \E_{\theta_0}
\one{X_i\in D_n } \int_{D^k_n} \frac 1 {\sqrt{2\pi}}e^{-\frac 1 2\big(x-f(X_{j-1},\theta_0)-\xi_{j-1}\big)^2}dx \le \\
\frac {u_k-u_{k-1}}{n}\widetilde \P_{\theta_0}(X_i\in D_n) \le C_1 n^{-2},
\end{multline*}
with  a constant $C_1$, independent of $\theta_0$.  Similar bound holds for $j<i$ and hence,
using the identity $\one{A}-\one{A\cap B}= \one{A\setminus (A\cap B)}=\one{A\cap B^c}$, we get
\begin{multline}\label{eq:3.21}
\bigg|
\widetilde \E_{\theta_0}
e^{\iunit \lambda_k\Psi_{j,k}}\one{A_{j,k}} -
\widetilde \E_{\theta_0}
e^{\iunit \lambda_k \Psi_{j,k}}\one{X_{j-1}\in D^k_n}\bigg| \le  \\
\widetilde \P_{\theta_0}\bigg(
\big\{X_{j-1}\in D^k_n\big\}\cap \bigcup_{i\ne j,\; i=n^{1/4}}^{n^{1/2}}\big\{X_{i-1}\in D_n\big\}
\bigg)\le
\\
 \sum_{i\ne j,\; i=n^{1/4}}^{n^{1/2}}\widetilde \P_{\theta_0}\big(X_{i-1}\in D_n, X_{j-1}\in D^k_n\big)\le
  C_1  n^{-3/2}.
\end{multline}
Further, since $(\widehat \eps_i)$ are i.i.d $N(0,1)$ and $\widehat \eps_i$ is independent of
$\F^X_{j-1}$ for $i\ge j$
\begin{align*}
&\widetilde \E_{\theta_0}
e^{\iunit \lambda_k\Psi_{j,k}}\one{X_{j-1}\in D^k_n}=\\
&
\widetilde \E_{\theta_0}\one{X_{j-1}\in D^k_n}
\exp\bigg(
-\frac 1 2 \lambda_k^2 X^2_{j-1} \sum_{i=j}^{n^{1/2}} Q^2(i-j)
- \iunit \frac 1 2 \lambda_k X_{j-1}^2 \sum_{i=j}^{n^{1/2}} Q^2(i-j)\bigg) =\\
&
\widetilde \E_{\theta_0}\one{X_{j-1}\in D^k_n}
\exp\bigg(
-\frac 1 2 \lambda_k^2 \theta_0^2 \sum_{i=0}^{n^{1/2}-j} Q^2(i)
- \iunit \frac 1 2 \lambda_k \theta_0^2  \sum_{i=0}^{n^{1/2}-j} Q^2(i)\bigg) + O(n^{-2})=\\
&
\widetilde \E_{\theta_0}\one{X_{j-1}\in D^k_n}
\exp\bigg(
-\frac 1 2  \lambda_k^2 \beta^2
- \iunit \frac 1 2 \lambda_k \beta^2
+\frac 1 2 \theta_0^2 \sum_{i=n^{1/2}-j+1}^{\infty} Q^2(i)(\lambda_k^2
+ \iunit  \lambda_k )
\bigg) + O(n^{-2}).
\end{align*}
By \eqref{sprobb},
\begin{multline*}
\widetilde \E_{\theta_0}\one{X_{j-1}\in D^k_n}
\exp\Big(
-\frac 1 2  \lambda_k^2 \beta^2
- \iunit \frac 1 2 \lambda_k \beta^2
\Big) = \\
\exp\Big(
-\frac 1 2  \lambda_k^2 \beta^2
- \iunit \frac 1 2 \lambda_k \beta^2
\Big) \varpi (u_k-u_{k-1})n^{-1} + O(n^{-2}),
\end{multline*}
and thus we have
\begin{equation}
\label{eq:3.31}
\begin{aligned}
&\bigg|
\widetilde \E_{\theta_0}
e^{\iunit \lambda_k\Psi_{j,k}}\one{X_{j-1}\in D^k_n}-
e^{
-\frac 1 2  \lambda_k^2 \beta^2
- \iunit \frac 1 2 \lambda_k \beta^2
} \varpi (u_k-u_{k-1})n^{-1}
\bigg|\le \\
&\bigg|
\widetilde \E_{\theta_0}
e^{\iunit \lambda_k\Psi_{j,k}}\one{X_{j-1}\in D^k_n}-
\widetilde \E_{\theta_0}
e^{
-\frac 1 2  \lambda_k^2 \beta^2
- \iunit \frac 1 2 \lambda_k \beta^2
}\one{X_{j-1}\in D^k_n}
\bigg|+O(n^{-2}) \le \\
&
\widetilde \E_{\theta_0}\one{X_{j-1}\in D^k_n}
\bigg|
\exp\bigg(
\frac 1 2 \theta_0^2 \sum_{i=n^{1/2}-j+1}^{\infty} Q^2(i)(\lambda_k^2
+ \iunit  \lambda_k )
\bigg)
-
1
\bigg|+O(n^{-2})\le \\
& \widetilde \P_{\theta_0}(X_{j-1}\in D^k_n)
C_2 b^{2(n^{1/2}-j)}+O(n^{-2}) \le
C_3 n^{-1} b^{n^{1/2}-j}+O(n^{-2}).
\end{aligned}
\end{equation}
Plugging \eqref{eq:3.21} and \eqref{eq:3.31} into \eqref{trngl}, we obtain
$$
\bigg|
\widetilde \E_{\theta_0}
e^{\iunit S_{n,1}}\one{A_{j,k}}
-
e^{-\frac \iunit 2 \lambda_k \beta^2-\frac 1 2 \beta^2\lambda_k^2 }(u_k-u_{k-1})\varpi n^{-1}
\bigg|
\le C_3 n^{-1} b^{n^{1/2}-j}+C_4 n^{-3/2},
$$
and in turn \eqref{Ajk}:
\begin{align*}
&
\left|\sum_{j=1}^{n^{1/2}} \widetilde \E_{\theta_0}
e^{\iunit S_{n,1}}\one{A_{j,k}}  -
e^{-\frac \iunit 2 \lambda_k \beta^2-\frac 1 2 \beta^2\lambda_k^2 }(u_k-u_{k-1})\varpi n^{-1/2}  \right| \le \\
&
\sum_{j=n^{1/4}}^{n^{1/2}}
\left| \widetilde \E_{\theta_0}
e^{\iunit S_{n,1}}\one{A_{j,k}}   -
 e^{-\frac \iunit 2 \lambda_k \beta^2-\frac 1 2 \beta^2\lambda_k^2 }(u_k-u_{k-1})\varpi n^{-1}
 \right|
 + \\
&
\left|
\sum_{j<n^{1/4}} \widetilde \E_{\theta_0}
e^{\iunit S_{n,1}}\one{A_{j,k}}\right|
+
\left|
\sum_{j<n^{1/4}} e^{-\frac \iunit 2 \lambda_k \beta^2-\frac 1 2 \beta^2\lambda_k^2 }(u_k-u_{k-1})\varpi n^{-1}
\right| \le \\
&  C_3 n^{-1}\sum_{j=n^{1/4}}^{n^{1/2}} b^{n^{1/2}-j}+C_4 n^{1/2} n^{-3/2} + n^{1/4}
\widetilde \P_{\theta_0}(A_{1,k}) + C_5 n^{-3/4} \le C_6 n^{-3/4}.
\end{align*}
By setting all $\lambda_k$'s to zero, we also get
\begin{equation}\label{AA1}
\widetilde \P_{\theta_0} (A_1) = \sum_{k=1}^d \sum_{j=1}^{n^{1/2}}
\widetilde \P_{\theta_0} (A_{j,k}) =
\sum_{k=1}^d(u_k-u_{k-1})\varpi n^{-1/2} +O(n^{-3/4}).
\end{equation}
Further,
\begin{multline}\label{AA0}
\widetilde \P_{\theta_0}(A_0) =
1- \widetilde\P_{\theta_0}\left(\bigcup_{j=1}^{n^{1/2}}\big\{X_{j-1}\in D_n\big\}\right) \ge \\
1- \sum_{k=1}^d \sum_{j=1}^{n^{1/2}}
\widetilde \P_{\theta_0}(X_{j-1}\in D^k_n) \stackrel{\dagger}{=}
1- \sum_{k=1}^d \sum_{j=1}^{n^{1/2}}
(u_k-u_{k-1})\varpi n^{-1} +O(n^{-3/2})= \\
1- \sum_{k=1}^d
(u_k-u_{k-1})\varpi n^{-1/2} +O(n^{-3/2})
\end{multline}
where in the equality $\dagger$ we used \eqref{sprobb}. On the other hand,
$\widetilde \P_{\theta_0} (A_0)\le 1-\widetilde \P_{\theta_0}(A_1)$ and the estimate  \eqref{A0}
follows from \eqref{AA1} and \eqref{AA0} and the asymptotic
$$
\widetilde \E_{\theta_0} e^{\iunit S_{n,1}}\one{A_0} = \widetilde \P_{\theta_0}(A_0) + O(b^{n^{1/4}}).
$$
Finally, \eqref{A2+} follows since
$\widetilde \P_{\theta_0}(A_{2+})=1- \widetilde \P_{\theta_0}(A_0)-\widetilde \P_{\theta_0}(A_1)=O(n^{-3/4})$.
\end{proof}

\subsection{Tightness}\label{sec-tight}
In this section we check the tightness conditions of  Theorem I.10.2, \cite{IH}.

\begin{lem}[condition 1.1 of Theorem I.10.2,  \cite{IH}]
For any compact $K\subset \Theta$, there is a constant $C>0$, such that
\begin{equation}
\label{holder}
\sup_{|u_1|\le R, |u_2|\le R}|u_2-u_1|^{-1}\E_{\theta_0} \left(\sqrt{Z_n(u_2)}-\sqrt{Z_n(u_1)}\right)^2 \le
C
\Big(1+R^2\Big),
\end{equation}
for all $\theta_0\in K$ and  $R\ge 0$.
\end{lem}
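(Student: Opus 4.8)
The plan is to control the left side of \eqref{holder} through the Hellinger affinity of the laws of the sample at $\theta_0+u_1/n$ and $\theta_0+u_2/n$. Since $Z_n(u)$ is a likelihood ratio, $\E_{\theta_0}Z_n(u)=1$, and therefore, for $u_1,u_2\in\mathbb U_n$,
$$
\E_{\theta_0}\big(\sqrt{Z_n(u_2)}-\sqrt{Z_n(u_1)}\big)^2=2\Big(1-\E_{\theta_0}\sqrt{Z_n(u_1)Z_n(u_2)}\Big).
$$
Hence it suffices to prove $1-\E_{\theta_0}\sqrt{Z_n(u_1)Z_n(u_2)}\le C_1|u_1-u_2|$ with $C_1$ depending only on $K$ and the known parameters of the model; the factor $1+R^2$ in \eqref{holder} then comes for free (we in fact obtain the stronger linear bound).

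Next I would record the multiplicative structure of the likelihood ratio implied by \eqref{lh} and the innovation representation \eqref{inno}. Put $\Delta_j(u):=f(X_{j-1},\theta_0+u/n)-f(X_{j-1},\theta_0)+\widehat\xi_{j-1}(\theta_0+u/n)-\widehat\xi_{j-1}(\theta_0)$, which is $\F^X_{j-1}$-measurable, and $a_j(u):=\Delta_j(u)/\sqrt{1+\gamma}$. Because $X_j-f(X_{j-1},\theta_0)-\widehat\xi_{j-1}(\theta_0)=\sqrt{1+\gamma}\,\widehat\eps_j$ under $\P_{\theta_0}$, with $(\widehat\eps_j)$ i.i.d.\ $N(0,1)$ and $\widehat\eps_j$ independent of $\F^X_{j-1}$, one gets $Z_m(u)=\prod_{j=1}^m\exp\big(a_j(u)\widehat\eps_j-\tfrac12 a_j(u)^2\big)$ for every $m\le n$ (with $Z_m(u)$ the likelihood ratio of the first $m$ observations, i.e.\ the $\F^X_m$-density of $\P_{\theta_0+u/n}$ w.r.t.\ $\P_{\theta_0}$), so that $\sqrt{Z_m(u_1)Z_m(u_2)}=W_m$, where $W_m:=\prod_{j=1}^m e^{c_j\widehat\eps_j-d_j}$, $c_j:=\tfrac12(a_j(u_1)+a_j(u_2))$, $d_j:=\tfrac14(a_j(u_1)^2+a_j(u_2)^2)$. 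Conditioning on $\F^X_{j-1}$ and using the Gaussian moment generating function gives $\E_{\theta_0}\big(e^{c_j\widehat\eps_j-d_j}\mid\F^X_{j-1}\big)=e^{c_j^2/2-d_j}=e^{-r_j}$ with $r_j:=\tfrac{(\Delta_j(u_1)-\Delta_j(u_2))^2}{8(1+\gamma)}\ge0$, so $(W_m)$ is a nonnegative supermartingale with $W_0=1$ and $\E_{\theta_0}W_m\le1$. Telescoping $1-W_n=\sum_{j=1}^n(W_{j-1}-W_j)$, taking expectations and using $1-e^{-r_j}\le r_j$,
$$
1-\E_{\theta_0}\sqrt{Z_n(u_1)Z_n(u_2)}=\sum_{j=1}^n\E_{\theta_0}\big[W_{j-1}(1-e^{-r_j})\big]\le\sum_{j=1}^n\E_{\theta_0}\big[W_{j-1}r_j\big].
$$
Since $W_{j-1}=\sqrt{Z_{j-1}(u_1)Z_{j-1}(u_2)}\le\tfrac12\big(Z_{j-1}(u_1)+Z_{j-1}(u_2)\big)$ and $r_j$ are $\F^X_{j-1}$-measurable, the change of measure $\E_{\theta_0}[Z_{j-1}(u_l)\,g]=\E_{\theta_l}[g]$, $\theta_l:=\theta_0+u_l/n$, valid for $\F^X_{j-1}$-measurable $g$, reduces the problem to proving
$$
\sum_{j=1}^n\E_{\theta_l}[r_j]\le C_2|u_1-u_2|,\qquad l=1,2,\ \text{uniformly over }\theta_0\in K.
$$

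The bound on $\sum_j\E_{\theta_l}[r_j]$ is the heart of the matter. Write $D$ for the interval with endpoints $\theta_1,\theta_2$, of length $|u_1-u_2|/n$. From $f(x,\theta_1)-f(x,\theta_2)=x(\rho^+-\rho^-)\big(\one{x\ge\theta_1}-\one{x\ge\theta_2}\big)$ and the constant-coefficient Kalman recursion \eqref{KF1c}, which yields $\widehat\xi_j(\theta_1)-\widehat\xi_j(\theta_2)=b\big(\widehat\xi_{j-1}(\theta_1)-\widehat\xi_{j-1}(\theta_2)\big)-c(\rho^+-\rho^-)X_{j-1}\one{X_{j-1}\in D}$ with $\widehat\xi_0(\cdot)=0$, one gets
$$
|\Delta_j(u_1)-\Delta_j(u_2)|\le|\rho^+-\rho^-|\Big(|X_{j-1}|\one{X_{j-1}\in D}+|c|\sum_{i=1}^{j-1}|b|^{\,j-1-i}|X_{i-1}|\one{X_{i-1}\in D}\Big).
$$
The essential simplification is that $D\subset\Theta$, a bounded set, so $|X_{i-1}|\one{X_{i-1}\in D}\le M_\Theta$ with $M_\Theta$ depending only on $\Theta$; no moment bounds on $X_j$ are needed. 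Applying Cauchy--Schwarz to the weighted sum (the weights $w_{j,j}=1$, $w_{j,i}=|c||b|^{j-1-i}$ for $i<j$ have total mass $\le W_*:=1+|c|/(1-|b|)$) gives $r_j\le C_3\sum_{i=1}^{j}w_{j,i}\one{X_{i-1}\in D}$ with $\sum_{i\le j}w_{j,i}\le W_*$ and $\sum_{j\ge i}w_{j,i}\le W_*$. Under $\P_{\theta_l}$ the conditional law of $X_i$ given $\F^X_{i-1}$ is $N\big(f(X_{i-1},\theta_l)+\widehat\xi_{i-1}(\theta_l),\,1+\gamma\big)$ (and $X_0\sim N(0,1)$), so its density is bounded and $\P_{\theta_l}(X_i\in D)\le (2\pi)^{-1/2}|D|=(2\pi)^{-1/2}|u_1-u_2|/n$ uniformly in $i$ and $\theta_l$. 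Interchanging the order of summation,
$$
\sum_{j=1}^n\E_{\theta_l}[r_j]\le C_3\sum_{i=1}^n\P_{\theta_l}(X_{i-1}\in D)\sum_{j\ge i}w_{j,i}\le C_3W_*\cdot n\cdot\frac{|u_1-u_2|}{\sqrt{2\pi}\,n}=C_2|u_1-u_2|.
$$
Combining the three displays gives $\E_{\theta_0}\big(\sqrt{Z_n(u_2)}-\sqrt{Z_n(u_1)}\big)^2\le 2C_2|u_1-u_2|\le 2C_2(1+R^2)|u_1-u_2|$, uniformly over $\theta_0\in K$, which is \eqref{holder}.

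The only genuinely delicate point is the bookkeeping in the last step: the geometric factor $|b|<1$ coming from the filter must be exploited \emph{twice} — once inside $r_j$ via Cauchy--Schwarz, and once when summing over $j$ — so that the double sum collapses to $\sum_i\P_{\theta_l}(X_{i-1}\in D)=O(|u_1-u_2|)$ rather than $O(n|u_1-u_2|)$. Everything else is routine; note in particular that because the relevant events force $X_{i-1}$ into the bounded parameter set, this argument sidesteps the moment estimates for $X_j$ used elsewhere in the paper, and the $(1+R^2)$ factor in \eqref{holder} is not actually needed.
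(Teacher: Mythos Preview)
Your proof is correct, but it takes a somewhat different route from the paper's. The paper first changes measure to $\P_{\theta_0+u_1/n}$ via the identity
\[
\E_{\theta_0}\big(\sqrt{Z_n(u_2)}-\sqrt{Z_n(u_1)}\big)^2
=2\,\E_{\theta_0+u_1/n}\Big(1-\sqrt{Z_n(u_2)/Z_n(u_1)}\Big),
\]
and then applies the elementary inequality $2(1-\sqrt{x})\le \ln(1/x)$ to bound the Hellinger increment by the Kullback--Leibler quantity $\E_{\theta_0+u_1/n}\ln\big(Z_n(u_1)/Z_n(u_2)\big)$. Under $\P_{\theta_0+u_1/n}$ this log-ratio decomposes into a martingale term (zero mean) plus the quadratic variation, and the latter is bounded by $C|u_2-u_1|$ using $\P(X_{j-1}\in D)\le C|u_2-u_1|/n$ and a Jensen-based estimate $\E(\delta^2_j)^2\le C|u_2-u_1|/n$ for the filter difference. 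Your argument instead stays under $\P_{\theta_0}$, exploits the supermartingale structure of $\sqrt{Z_m(u_1)Z_m(u_2)}$, telescopes, and uses AM--GM followed by a change of measure to each of $\theta_1,\theta_2$; the core quadratic estimate $\sum_j\E_{\theta_l}[r_j]\le C|u_1-u_2|$ is essentially the same, obtained via Cauchy--Schwarz on the explicit solution of the filter-difference recursion rather than via Jensen on the recursion itself. The paper's route is shorter (one inequality and one measure change), while your supermartingale decomposition is more structural and makes the role of the innovation representation very transparent; both deliver the same linear bound, so your remark that the factor $1+R^2$ is not actually needed is equally true of the paper's argument.
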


\begin{proof}
Suppose $u_2\ge u_1$, then using the elementary inequality $\ln \frac 1 x \ge  2(1-\sqrt x)$, $x>0$ we get
\begin{multline*}
\E_{\theta_0} \left(\sqrt{Z_n(u_2)}-\sqrt{Z_n(u_1)}\right)^2  = \E_{\theta_0+u_1/n }
\left(\sqrt{\frac{Z_n(u_2)}{Z_n(u_1)}}-1\right)^2 = \\
2\E_{\theta_0+u_1/n }
\left(1-\sqrt{\frac{Z_n(u_2)}{Z_n(u_1)}}\right) \le \E_{\theta_0+u_1/n }\ln \frac{Z_n(u_1)}{Z_n(u_2)}.
\end{multline*}
Similarly to \eqref{S}, we find that under $\P_{\theta_0+u_1/n }$,
\begin{multline*}
\ln \frac{Z_n(u_1)}{Z_n(u_2)}  =\frac 1{1+\gamma}
\sum_{j=1}^n\left(
X_{j-1} (\rho^+-\rho^-) \one{X_{j-1}\in D^2_n}
+\delta^2_{j-1}
\right)\times \\
\bigg(
\sqrt{1+\gamma}\widehat \eps_j
+\frac 1 2 \Big(X_{j-1}(\rho^+-\rho^-)\one{X_{j-1}\in D^2_n}+\delta^2_{j-1}\Big)
\bigg),
\end{multline*}
where $(\delta^2_j)$ is defined in \eqref{delta}. Note that
$\widehat \eps_j$ is independent
of $\F^X_{j-1}$ under $\P_{\theta_0+u_1/n}$ and, as in the  proof of Lemma \ref{lem-1},
$\E_{\theta_0+u_1/n}\big(\delta^2_j \big)^2\le C_1(u_2-u_1)n^{-1}$. Hence
\begin{multline*}
\E_{\theta_0} \left(\sqrt{Z_n(u_2)}-\sqrt{Z_n(u_1)}\right)^2 \le \\
\sum_{j=1}^n
\E_{\theta_0+u_1/n }\left(
X_{j-1} (\rho^+-\rho^-) \one{X_{j-1}\in D^2_n}
+\delta^2_{j-1}
\right)^2 \le C_2 |u_2-u_1|,
\end{multline*}
where $C_2$ depends only on $K$.
By symmetry, the same  inequality holds when $u_2\le u_1$ and \eqref{holder} follows.
\end{proof}

\begin{lem}[condition 1.2 of Theorem I.10.2 \cite{IH}]\label{lem-ld}
For any $p\ge 1$,  there is a constant $C(p)$, such that
$$
\E_{\theta_0}Z^{1/2}_n(u)\le \frac {C(p)}{|u|^p}, \quad u\in \mathbb{U}_n:=n(\Theta-\theta_0).
$$
\end{lem}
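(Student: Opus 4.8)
The plan is to bound the fractional moment $\E_{\theta_0}Z_n^{1/2}(u)$ by exploiting the explicit Gaussian form of the likelihood ratio. Write $v:=u/n$ and, as in \eqref{S}, express
$$
\ln Z_n(u) = -\frac 1{1+\gamma}\sum_{j=1}^n \Big(X_{j-1}(\rho^+-\rho^-)\one{X_{j-1}\in D_n^u}+\delta_{j-1}^u\Big)
\Big(\sqrt{1+\gamma}\,\widehat\eps_j + \tfrac12\big(X_{j-1}(\rho^+-\rho^-)\one{X_{j-1}\in D_n^u}+\delta_{j-1}^u\big)\Big),
$$
where $D_n^u:=[\theta_0,\theta_0+u/n]$ (or the reversed interval for $u<0$) and $(\delta_j^u)$ solves the recursion \eqref{delta} with $D_n^k$ replaced by $D_n^u$. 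Since $\widehat\eps_j$ is $N(0,1)$ and independent of $\F_{j-1}^X$ under $\P_{\theta_0}$, conditioning successively on $\F_{j-1}^X$ and using $\E e^{t\widehat\eps_j}=e^{t^2/2}$ gives a telescoping identity: writing $g_{j-1}:=\frac1{1+\gamma}\big(X_{j-1}(\rho^+-\rho^-)\one{X_{j-1}\in D_n^u}+\delta_{j-1}^u\big)$,
$$
\E_{\theta_0}Z_n^{1/2}(u) = \E_{\theta_0}\exp\!\left(-\frac18\sum_{j=1}^n (1+\gamma)\,g_{j-1}^2\right),
$$
the standard computation for exponentials of Gaussian log-likelihood ratios (the $-\tfrac18$ rather than $-\tfrac12$ coming from the square root and from completing the square).

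Next I would show that the exponent is bounded above by a constant times $-$(number of indices $j\le n$ with $X_{j-1}\in D_n^u$). Indeed $g_{j-1}^2 \ge \tfrac12 \big(X_{j-1}(\rho^+-\rho^-)\big)^2\one{X_{j-1}\in D_n^u} - (\delta_{j-1}^u)^2/(1+\gamma)^2$, and because $0\notin\Theta$ and $K$ is compact, on $\{X_{j-1}\in D_n^u\}$ one has $X_{j-1}^2\ge c_0>0$ uniformly in $\theta_0\in K$ for large $n$; moreover from \eqref{delta} the perturbation $\delta^u$ is itself a geometrically weighted sum of the indicators $\one{X_{i-1}\in D_n^u}$, so $\sum_j (\delta_{j-1}^u)^2 \le C\sum_j \one{X_{j-1}\in D_n^u}$ with the constant small relative to $c_0$ (or, if one prefers, absorb it by a crude split). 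Hence
$$
\E_{\theta_0}Z_n^{1/2}(u) \le \E_{\theta_0}\exp\!\left(-c_1\,N_n(u)\right),\qquad N_n(u):=\#\{1\le j\le n: X_{j-1}\in D_n^u\},
$$
for a positive constant $c_1$ depending only on $K$.

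The final and main step is to prove that $N_n(u)$ is "typically of order $|u|$", so that $\E e^{-c_1 N_n(u)}$ decays faster than any power of $|u|$. This is where the ergodic input from Proposition~\ref{prop1} and the occupation-time estimates of Appendix~\ref{app-sec} (the bounds referenced as \eqref{probb}, \eqref{sprobb}) enter: I would first dispose of very large $|u|$ (say $|u|>\eps n$), where $D_n^u$ has length bounded below and positive recurrence forces $N_n(u)\gtrsim n$ with overwhelming probability, making the bound trivial. For $|u|\le\eps n$, the interval $D_n^u$ has length $|u|/n$; a second-moment / blocking argument shows $\E_{\theta_0}N_n(u)\asymp \varpi|u|$ and, crucially, that $N_n(u)$ does not concentrate too heavily near $0$ — concretely $\widetilde\P_{\theta_0}(N_n(u)=0)\le e^{-c_2|u|}$, obtained by partitioning $\{1,\dots,n\}$ into $\sim |u|$ blocks of length $\sim n/|u|$, noting that in each block the chain mixes and hits $D_n^u$ with probability bounded below by $c_3|u|/n \cdot n/|u| = $ (a positive constant, using \eqref{sprobb} for the one-step hitting probability and geometric ergodicity to make the blocks nearly independent), exactly the pattern used in Lemma~\ref{lem-3}. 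Combining, $\E_{\theta_0}e^{-c_1 N_n(u)} \le \widetilde\P_{\theta_0}(N_n(u)\le \tfrac12\varpi|u|) + e^{-c_1\varpi|u|/2}$, and a Chernoff/large-deviation bound for the block count controls the first term by $e^{-c_4|u|}$. Since $e^{-c|u|}\le C(p)/|u|^p$ for every $p$, the claim follows, uniformly over $\theta_0\in K$ because every constant above has been taken to depend only on $K$. The main obstacle is the lower tail estimate $\widetilde\P_{\theta_0}(N_n(u)\text{ small})\le e^{-c|u|}$ with constants uniform in $n$ and $\theta_0$: it requires carefully trading off block length against mixing rate so that the approximate independence error (geometric in block length) stays negligible against the $\sim|u|$ product of per-block non-hitting probabilities, much as in the telescoping argument of Lemma~\ref{lem-2}.
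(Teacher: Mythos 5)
Your overall skeleton --- reduce $\E_{\theta_0}Z_n^{1/2}(u)$ to a lower bound on the occupation time $N_n(u)$ of the shrinking interval $D_n^u$ and then invoke ergodicity --- matches the paper's, but three of your steps contain genuine gaps. First, the identity $\E_{\theta_0}Z_n^{1/2}(u)=\E_{\theta_0}\exp\big(-\tfrac18\sum_j(1+\gamma)g_{j-1}^2\big)$ is false for $n\ge 2$: integrating out $\widehat\eps_n$ given $\F^X_{n-1}$ does produce the factor $e^{-(1+\gamma)g_{n-1}^2/8}$, but $g_{n-1}$ depends on $\widehat\eps_{n-1}$, so the subsequent Gaussian integrations are performed under a tilted (shifted-mean) law rather than $\P_{\theta_0}$; what one actually obtains is $\E_{\theta_0}[M_n e^{-\Sigma/8}]$ with $M_n$ an exponential martingale and $\Sigma$ the quadratic variation. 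The paper sidesteps this by splitting on the event $\{Z_n^{1/2}\ge e^{-cu}\}$ and using the martingale only through $\E_{\theta_0}M_n=1$ in a Markov inequality, which reduces everything to $\P_{\theta_0}(\Sigma\le cu)\le C(p)u^{-p}$. Second, your claim that $\sum_j(\delta^u_{j-1})^2\le C\sum_j\one{X_{j-1}\in D_n^u}$ with $C$ ``small relative to $c_0$'' is unjustified: $\delta^u_j$ is a geometrically weighted sum of $c(\rho^+-\rho^-)X_{i-1}\one{X_{i-1}\in D_n^u}$, and when $|u|/n$ is of order one the factor $|X_{i-1}|$ can be as large as $\sup|\Theta|$, so $|\delta^u_j|$ can exceed $|\theta_0|\,|\rho^+-\rho^-|$ and cancel the signal entirely. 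This is exactly why the paper splits into $u\le n^s$ (where $|X_{i-1}|\le \theta_0+n^{s-1}$ on the event, giving $|\delta_j|\le |a|\,|\rho^+-\rho^-|(\theta_0+n^{s-1})$, strictly below the signal since $|a|<1$) and $u>n^s$ (where it inserts the event $\Gamma_{j-1,k'}$ that the chain avoided $\Theta$ for $k'$ steps, forcing $\delta_j$ to have decayed geometrically).

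Third, the exponential lower-tail bound $\widetilde\P_{\theta_0}(N_n(u)\le\tfrac12\varpi|u|)\le e^{-c|u|}$ is both stronger than needed and harder than your sketch suggests: with $\sim|u|$ blocks of length $n/|u|$, the telescoping mixing error is of order $|u|\,q^{n/|u|}$, which is not small when $|u|$ is a large power of $n$ (the block is then too short for geometric mixing to beat the number of blocks); moreover a per-block hitting probability bounded away from zero does not follow from summing one-step probabilities --- it needs a second-moment (Paley--Zygmund) argument. The paper instead settles for the polynomial bound $\P_{\theta_0}(\sum_j V_j^2\le cu)\le C(p)u^{-p}$, obtained from $2p$-th moment estimates of centered occupation sums via the Dedecker--Doukhan covariance inequality in the regime $u\le n^s$ and the Doukhan--Louhichi Marcinkiewicz--Zygmund inequality in the regime $u>n^s$, with the exponent $s=(p+1)/(p+2)$ tuned to $p$; this is precisely enough for the claimed $C(p)/|u|^p$ decay and avoids all three difficulties above.
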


\begin{proof}
The proof is an adaptation  of the analogous Lemma 2.2 in \cite{ChK11}.
We shall assume $\theta_0>0$ and $u>0$, omitting the similar complementary cases (recall that $0\not\in \Theta$).
Note that for a constant $c>0$,
\begin{align*}
\E_{\theta_0}Z^{1/2}_n(u) =&\; \E_{\theta_0}Z^{1/2}_n(u)\one{Z^{1/2}_n(u)\ge e^{-c{u}}} +
\E_{\theta_0}Z^{1/2}_n(u)\one{Z^{1/2}_n(u)< e^{-c{u}}} \le \\
& \Big(\E_{\theta_0}Z_n(u)\Big)^{1/2}\Big(\P_{\theta_0}\big(Z^{1/2}_n(u)\ge e^{-c{u}}\big)\Big)^{1/2}+
e^{-\frac c 2 {u}} =\\
&\Big(\P_{\theta_0}\big(\ln Z^{1/2}_n(u)\ge -c {u} \big)\Big)^{1/2}+
e^{-\frac c 2 {u}},
\end{align*}
and hence it is enough to check the large deviation bound
\begin{equation}
\label{nado}
\P_{\theta_0}\Big( \frac 1 2\ln Z_n(u)\ge -c {u} \Big)\le \frac{C(p)}{{u}^{p}},
\end{equation}
for some positive constants $c$ and $C(p)$ and all $p\ge 1$.

For $u_1:=u>0$  the formula   \eqref{SS} gives
\begin{multline*}
\ln Z_n(u) =
-\frac 1{1+\gamma}
\sum_{j=1}^n\left(
X_{j-1} (\rho^+-\rho^-) \one{X_{j-1}\in D^1_n}
+\delta_{j-1}
\right)\times \\
\bigg(
\sqrt{1+\gamma}\widehat \eps_j
+ \frac 1 2 \Big(X_{j-1}(\rho^+-\rho^-)\one{X_{j-1}\in D^1_n}+\delta_{j-1}\Big)
\bigg)=
\sum_{j=1}^n \Big(\widehat \eps_jV_{j-1}  - \frac 1 2 V^2_{j-1}\Big)
\end{multline*}
where $D^1_n = [\theta_0,\theta_0+u/n]$, the sequence $(\delta_j)$ is generated by \eqref{delta} with $k=1$,  and
$$
V_{j-1} := -\frac 1 {\sqrt{1+\gamma}}\Big(X_{j-1}(\rho^+-\rho^-)\one{X_{j-1}\in D^1_n}+\delta_{j-1}\Big).
$$
Further,
\begin{align*}
& \P_{\theta_0}\Big(\frac 1 2 \ln Z_n(u)\ge -c {u} \Big) =\\
&
\P_{\theta_0}\bigg( \sum_{j=1}^n \bigg( \widehat \eps_j\Big(\frac 1 2 V_{j-1}\Big)
- \frac 1 2 \Big(\frac 1 2 V_{j-1}\Big)^2\bigg)
-\frac 1 8\sum_{j=1}^n  V^2_{j-1}
\ge -c {u} \bigg) \le \\
& \P_{\theta_0}\bigg( \sum_{j=1}^n \bigg( \widehat \eps_j\Big(\frac 1 2 V_{j-1}\Big)
- \frac 1 2 \Big(\frac 1 2 V_{j-1}\Big)^2\bigg)
\ge  c {u} \bigg)+
\P_{\theta_0}\bigg(
-\frac 1 8\sum_{j=1}^n  V^2_{j-1}
\ge - 2 c {u} \bigg),
\end{align*}
and since
\begin{multline*}
\P_{\theta_0}\bigg( \sum_{j=1}^n \bigg( \widehat \eps_j\Big(\frac 1 2 V_{j-1}\Big)
- \frac 1 2 \Big(\frac 1 2 V_{j-1}\Big)^2\bigg)
\ge  c {u} \bigg)\le \\
e^{-c{u}}
\E_{\theta_0}\exp\bigg( \sum_{j=1}^n \bigg( \widehat \eps_j\Big(\frac 1 2 V_{j-1}\Big)
- \frac 1 2 \Big(\frac 1 2 V_{j-1}\Big)^2\bigg)
 \bigg)=e^{-c{u}},
\end{multline*}
the bound \eqref{nado} holds, if we show that for some positive constant $c$
and all $p\ge 1$,
\begin{equation}
\label{Vbnd}
\P_{\theta_0}\bigg(
\sum_{j=0}^{n-1}  V^2_{j}
\le  c {u} \bigg)\le \frac {C(p)}{{u}^p}.
\end{equation}

To this end, we shall split the consideration to the cases ${u} \le n^s$ and ${u}> n^s$, where $s\in (0,1)$ is a constant to be chosen later on,
depending on $p$ in \eqref{Vbnd}.

\subsubsection*{Case ${u}\le n^s$}
In this case,  $D^1_n \subseteq \big[\theta_0, \theta_0+\frac 1{n^{1-s}}\big]$ and since
$$
\delta_j =
b  \delta_{j-1} -
c(\rho^+-\rho^-)X_{j-1}\one{X_{j-1}\in D^1_n},\quad j\ge 1,
$$
subject to $\delta_0=0$, it follows that
\begin{multline*}
|\delta_j| \le |c|  |\rho^+-\rho^-|\Big(\theta_0+ n^{s-1}\Big) \frac 1{1-|b|} = \\
\frac {|a|\gamma}{1 -|a| +\gamma} |\rho^+-\rho^-|\big(\theta_0+ n^{s-1}\big) \le
|a| |\rho^+-\rho^-|\big(\theta_0+ n^{s-1}\big),
\end{multline*}
where we used the definitions $c:=\frac{a\gamma}{1+\gamma}$ and $b:=\frac{a}{1+\gamma}$ and the assumption $|a|<1$. Further,
\begin{align*}
V^2_{j} =& \frac 1 {1+\gamma}\Big(X_{j}(\rho^+-\rho^-)\one{X_{j}\in D^1_n}+\delta_{j}\Big)^2\ge \\
& \frac 1 {1+\gamma}\Big(X_{j}(\rho^+-\rho^-)+\delta_{j}\Big)^2\one{X_{j}\in D^1_n} \ge \\
& \frac {(\rho^+-\rho^-)^2} {1+\gamma}\Big(\theta_0 -
|a| \big(\theta_0+ n^{s-1}\big)
\Big)^2\one{X_{j}\in D^1_n} \ge \\
&
 \frac {(\rho^+-\rho^-)^2} {1+\gamma}\frac 1 4 \theta_0^2 \big(1-|a|\big)^2\one{X_{j}\in D^1_n}:= C_1\one{X_{j}\in D^1_n}
\end{align*}
where the latter inequality holds for all $n$ large enough. Consequently,
$$
\P_{\theta_0}\bigg(
\sum_{j=0}^{n-1}  V^2_{j}
\le  c {u} \bigg) \le
\P_{\theta_0}\bigg(
\sum_{j=0}^{n-1}  \one{X_{j}\in D^1_n}
\le  \frac c{C_1} {u} \bigg)\le
\P_{\theta_0}\bigg(
\sum_{j=n^{1/2}}^{n-1}  \one{X_{j}\in D^1_n}
\le  \frac c{C_1} {u} \bigg)
$$
By Lemma \ref{lem-erg}, the process $(X_j)$ is  geometric mixing and we have
\begin{multline*}
\P_{\theta_0}\big(X_{j}\in D^1_n\big) = \E_{\theta_0} \int_{\theta_0}^{\theta_0+u/n} \frac{1}{\sqrt{2\pi}} e^{-\frac 1 2(t-\xi_{j-1}-f(X_{j-1},\theta_0))^2}dt\ge \\
\frac u n \frac{1}{\sqrt{2\pi}}\E_{\theta_0}\inf_{t\in [\theta_0, \theta_0+1]}
 e^{-\frac 1 2\big(t-\xi_{j-1}-f(X_{j-1},\theta_0)\big)^2}\ge  C_2\frac {{u}} n,
\end{multline*}
where the constant $C_2>0$ can be chosen for all $\theta_0\in K$ to be independent of $j$ and $n$ by the ergodic properties of $(X_j,\xi_j)$
from Lemma \ref{lem-erg}.
Hence with $c:=\frac 1 2 C_1C_2$, for an integer $p\ge 1$,
\begin{multline*}
\P_{\theta_0}\bigg(
\sum_{j=0}^{n-1}  V^2_{j}
\le  c {u} \bigg)\le \P_{\theta_0}\bigg(
\sum_{j=n^{1/2}}^{n-1}  \one{X_{j}\in D^1_n}
\le  \frac c{C_1} {u} \bigg)\le \\
\P_{\theta_0}\bigg(
\bigg|\sum_{j=n^{1/2}}^{n-1}  \big(\one{X_{j}\in D^1_n}-\E_{\theta_0}\one{X_{j}\in D^1_n}\big)\bigg|
\ge\frac 1 3  C_2{u}\bigg)\le\\
(C_2/3)^{-2p}\frac 1 {{u}^{2p}}\E_{\theta_0}\left(\sum_{j=n^{1/2}}^{n-1} \eta_j\right)^{2p},
\end{multline*}
where we defined $\eta_j:=\one{X_{j}\in D^1_n}-\E_{\theta_0}\one{X_{j}\in D^1_n}$.
Since $|\eta_j|\le 2$, by Lemma \ref{lem-erg},
\begin{multline*}
 \bigg|\E_{\theta_0}\bigg(\sum_{j=n^{1/2}}^{n-1} \eta_j\bigg)^{2p}-
\widetilde \E_{\theta_0}\bigg(\sum_{j=n^{1/2}}^{n-1} \eta_j\bigg)^{2p}
\bigg|\le \\
 n^{2p}\E_{\theta_0}\bigg|\E_{\theta_0}\bigg(\bigg(\frac 1 n \sum_{j=n^{1/2}}^{n-1} \eta_j\bigg)^{2p}\bigg|\F_{0}\bigg)-
\widetilde \E_{\theta_0}\bigg(\frac  1 n \sum_{j=n^{1/2}}^{n-1} \eta_j\bigg)^{2p}
\bigg|\le C n^{2p} r^{n^{1/2}} \le 1,
\end{multline*}
for all $n$ large enough.
Hence it is enough to check
\begin{equation}
\label{stat}
\widetilde \E_{\theta_0}\left(\sum_{j=n^{1/2}}^{n-1} \eta_j\right)^{2p}\le C(p) {u}^p.
\end{equation}
To estimate the latter expectation, we shall apply the covariance inequality (8.1) from  \cite{DD03}:
\begin{multline}
\label{DDineq}
\widetilde \E_{\theta_0}\left(\sum_{j=n^{1/2}}^{n-1}  \eta_j \right)^{2p}
=
\widetilde \E_{\theta_0}\left(\sum_{j=0}^{n-n^{1/2}-1} \eta_j\right)^{2p}\le  \\
\left(4pn\sum_{j=0}^{n}\Big(\widetilde\E_{\theta_0}\big|\eta_0 \widetilde\E_{\theta_0}(\eta_j|\F_0)\big|^p\Big)^{1/p} \right)^{p}.
\end{multline}
Since $|\eta_0|\le 2$,
\begin{equation}
\label{eq:2.27}
\widetilde\E_{\theta_0}\big|\eta_0\widetilde \E_{\theta_0}(\eta_j|\F_0)\big|^p\le
4^{p}\; \widetilde\E_{\theta_0}\big|\widetilde \E_{\theta_0}(\eta_j|\F_0)\big|^p
\le
4^{p}\; \Big(\frac u n C_4 C r^j \Big)^p,
\end{equation}
where the latter inequality holds by Lemma \ref{lem-erg}, since
$\widetilde \E_{\theta_0}(\eta_j|\F_0)=\widetilde\E_{\theta_0}\big(\widetilde\E_{\theta_0}(\eta_j|\F_{j-1})\big|\F_0\big)$
and $\big|\widetilde\E_{\theta_0}(\eta_j|\F_{j-1})\big|\le  C_4\frac u n $.
Plugging the bound \eqref{eq:2.27} into \eqref{DDineq}, we obtain \eqref{stat} and consequently \eqref{Vbnd} for  ${u}\le n^s$.

\subsubsection*{Case ${u}> n^s$}
For a fixed integer $k$ and all $j> k$, define  $\Gamma_{j-1,k}:=\bigcap_{i=j-k}^{j-1} \{X_i\not\in\Theta\}$ and note that on $\Gamma_{j-1,k}$ we have
$$
|\delta_j|/|\rho^+-\rho^-| =\bigg|
c \sum_{i=1}^{j-k} b^{j-i} X_{i-1}\one{X_{i-1}\in D^1_n}\bigg|\le
|c| \sup |\Theta| \frac{|b|^k}{1-|b|}=: C_1 b^k.
$$
Now let $k'$ be such that, $C_1 b^{k'} \le \frac 1 2 \theta_0$, then (recall that both $\theta_0$ and $u$ are positive)
\begin{multline*}
V^2_{j} \ge
\frac 1 {1+\gamma}\Big(X_{j}(\rho^+-\rho^-)+\delta_{j}\Big)^2 \one{X_{j}\in D^1_n}\one{\Gamma_{j-1,k'}}\ge \\
\frac {(\rho^+-\rho^-)^2} {1+\gamma}\frac 1 4 \theta_0^2 \one{X_{j}\in D^1_n}\one{\Gamma_{j-1,k'}}=:
C_2 \one{X_{j}\in D^1_n}\one{\Gamma_{j-1,k'}}.
\end{multline*}
Define $W_j:= \one{X_{j}\in D^1_n}\one{\Gamma_{j-1,k'}}$ and note that
\begin{multline*}
\E_{\theta_0} W_j
 =\E_{\theta_0}\one{\Gamma_{j-1,k}} \P_{\theta_0}\big(X_{j}\in D^1_n|X_{j-1}, \xi_{j-1}\big)  \ge  \\
\frac u n
 \E_{\theta_0}\one{\Gamma_{j-1,k'}}
\frac 1 {\sqrt{2\pi}}\inf_{t\in \Theta}e^{-\frac 1 2\big(t-\xi_{j-1}-f(X_{j-1};\theta_0)\big)^2} \ge C_3 \frac {{u}} n,
\end{multline*}
where the positive constant $C_3$ can be chosen independent of $j$ and $n$ due to the ergodic properties of $(X_j,\xi_j)$ from Lemma \ref{lem-erg}.
Hence with $c:=\frac 1 2 C_3C_2$ and any integer $p>1$,
\begin{multline*}
\P_{\theta_0}\bigg(
\sum_{j=0}^{n-1}  V^2_{j}
\le  c {u} \bigg)\le
\P_{\theta_0}\bigg(
\sum_{j=k'}^{n-1}  W_{j}
\le  \frac 1 2 C_3 {u} \bigg)
\le \\
\P_{\theta_0}\bigg(
\Big|\sum_{j=k'}^{n-1}  \big(W_{j}-\E_{\theta_0}W_j\big)\Big|
\ge  \frac 1 3 {u}C_3 \bigg)\le \left( 3/C_3\right)^{2p+2} \frac 1 {{u}^{2p+2}}
\E_{\theta_0}
\bigg(
\sum_{j=k'}^{n-1}  \big(W_{j}-\E_{\theta_0}W_j\big)\bigg)^{2p+2}
\end{multline*}
for all sufficiently large $n$.
Now \eqref{Vbnd} follows if we show that for a positive constant $C(p)$,
\begin{equation}
\label{showme}
\E_{\theta_0} \bigg(\sum_{j=k'}^{n-1}  \big(W_{j}-\E_{\theta_0} W_j\big)\bigg)^{2p+2}\le C(p) {u}^{p+2}, \quad\forall\; {u}>n^s.
\end{equation}
To this end, we shall use the Marcinkiewicz-Zygmund inequality from \cite{DL99}. For a sequence of random variables $(\eta_j)_{j\in \mathbb{N}}$,
the coefficient of weak dependence is defined
$$
C_{t,q} := \sup\big|\cov\big(\eta_{t_1}...\eta_{ t_m},\eta_{t_{m+1}}... \eta_{t_q}\big)\big|,
$$
where the supremum is taken over all $\{t_1,...,t_q\}$, such that $1\le t_1\le ...\le t_q$ and $m$, $t$
satisfy $t_{m+1}-t_m=t$.

\begin{thm}[Theorem 1, \cite{DL99}]
Let $(\eta_j)_{j\in \mathbb{N}}$ be a sequence of central random variables such that for a fixed integer $q\ge 2$,
\begin{equation}
\label{weakd}
C_{t,q}=O(t^{-q/2})\quad \text{as}\quad t\to\infty.
\end{equation}
Then there exists a positive constant $B$, independent of $n$, for which
\begin{equation}
\label{DLres}
\bigg|\E \bigg(\sum_{j=0}^{n-1} \eta_j\bigg)^q\bigg|\le B n^{q/2}.
\end{equation}
\end{thm}

We shall apply this theorem to the bounded sequence $\eta_j:=W_j-\E_{\theta_0}W_j$. Since $\eta_j$ is a function of $(X_{j-k'},...,X_j)$,
it inherits the mixing property \eqref{mix}. More precisely,  with
\begin{align*}
&h(x,y):=\E_{\theta_0}\big(\eta_{t_1}...\eta_{t_m}|X_{t_m}=x, \xi_{t_m}=y\big),\\
& g(x,y):=\E_{\theta_0}\big(\eta_{t_{m+1}}...\eta_{t_q}|X_{t_{m+1}-k'}=x, \xi_{t_{m+1}-k'}=y\big)
\end{align*}
by the Markov property of $(X_j,\xi_j)$,  for $t\ge k'$
\begin{multline*}
C_{t,q}=\Big|\E_{\theta_0}\eta_{t_1}...\eta_{t_q}-\E_{\theta_0}\eta_{t_1}...\eta_{t_m}\E_{\theta_0}\eta_{t_{m+1}}...\eta_{t_q}\Big|= \\
 \Big|\E_{\theta_0}h(X_{t_m},\xi_{t_m})g(X_{t_{m+1}-k'},\xi_{t_{m+1}-k'})-
\E_{\theta_0}h(X_{t_m},\xi_{t_m})\E_{\theta_0}g(X_{t_{m+1}-k'},\xi_{t_{m+1}-k'})\Big| \le \\
2^q C r^{t-k'}(1+r^{t_m}) \le 2^{q+1} C r^{t-k'},
\end{multline*}
which clearly satisfies \eqref{weakd}. Since $n< {u}^{1/s}$,   \eqref{DLres} now gives
$$
\E_{\theta_0} \bigg(\sum_{j=1}^{n}  \big(W_{j}-\E_{\theta_0} W_j\big)\bigg)^{2p+2}\le B n^{p+1}\le B {u}^{(p+1)/s},
$$
and the bound \eqref{showme} follows, if we choose $s:=(p+1)/(p+2)$.

\end{proof}

\section{Simulated experiments}\label{sec-sim}

The objective of this section is to illustrate the results of Theorem \ref{thm} by means of a simulation.
To this end, we fixed the following values of the parameters
$$
\theta_0=1.5, \ \rho^+=0.9,\ \rho_-=-0.5,\ a=0.9
$$
and estimated the root mean square errors of the Bayes estimator $\widetilde \theta_n$ and the ML estimator $\widehat \theta_n$
by averaging over a large number of Monte Carlo trials. This has been done in two ways: by computing the estimators, based on simulated data,
and computing the corresponding limit quantities, based on simulated  process from the limit experiment. The practical advantage, offered by
Theorem \ref{thm}, is that the latter simulation requires  much less CPU time than the former.

\begin{figure}
  \includegraphics[scale=0.6]{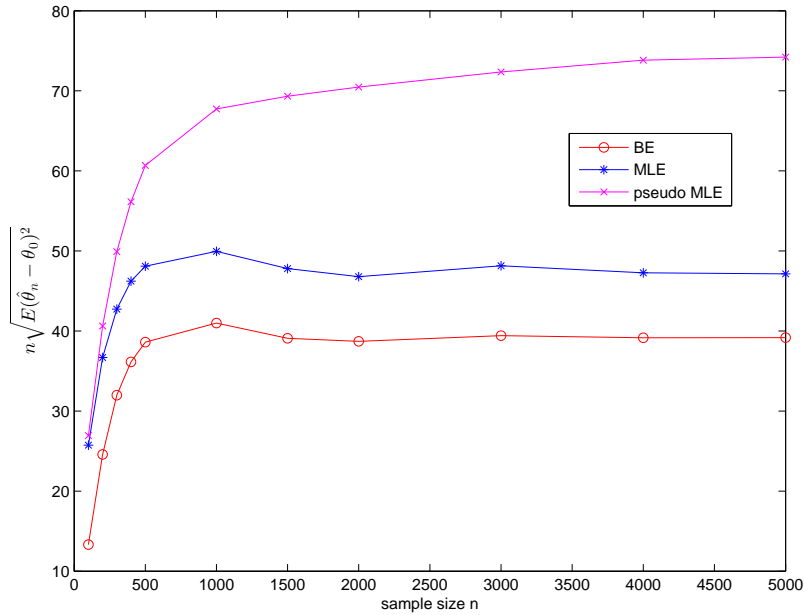}\\
  \caption{\label{fig1} The normalized empirical root mean squares of the BE, MLE and pseudo MLE versus the sample sizes}
\end{figure}

\subsection{Simulated data}

Using the recursions \eqref{ar} and \eqref{xi}, we generated a large number ($M=20.000$) of sample paths.
For each path we computed the Bayes estimator $\widetilde \theta_n$, using the formula \eqref{BE} and the
uniform prior on the interval $\Theta := (1,2)$. Then we calculated the normalized empirical root mean square error for a number of
sample sizes
$$
n \sqrt{\widehat\E_{\theta_0}\big(\widetilde \theta_n-\theta_0 \big)^2},
$$
where $\widehat\E_{\theta_0}$ denotes averaging over the paths.

Similarly, we computed the (central
\footnote{
Since the likelihood function \eqref{lh} is piecewise constant, the MLE is not unique.
}
) ML estimator and the {\em pseudo} ML estimator, which assumes independent innovations with the same variance $1+ 1/(1-a^2)$.
The results, depicted at Figure \ref{fig1}, indicate that the errors converge as the sample size $n$ increases
and that the Bayes estimator performs better than the others for smaller sample sizes as well.

\subsection{Simulated limit experiment}

While the distribution of the random variable $\tilde u$, defined in Theorem \ref{thm}, cannot be computed
in a closed form, it is easy to sample and the  expectations can be approximated by averaging over Monte Carlo trials.
To this end, we estimated the value of
$$
\varpi = \int_\Real p(\theta_0,y;\theta_0)dy \approx 0.0576
$$
using standard kernel estimator, applied to a single long trajectory of $(X_j)$. Figure \ref{fig2} depicts the estimated
marginal density
$$
\int_\Real p(x,y;\theta_0)dy, \quad x\in [-8,8].
$$
Next we generated a large number ($M=10^6$) of samples from the compound Poisson process $Z(u)$, defined in \eqref{Y} and computed
the approximate root mean square errors
$$
\sqrt{\widehat \E_{\theta_0}(\tilde u)^2}\approx 38.64 \quad \text{and}\quad \sqrt{\widehat \E_{\theta_0}(\hat u)^2}\approx 46.88,
$$
where $\hat u$ is the central maximizer of $Z(u)$ and $\widehat \E_{\theta_0}$ denotes the empirical expectation.
Note that the obtained estimates are at good correspondence with the plots at Figure \ref{fig1}.

The typical realization  of $Z(u)$ along with  $\tilde u$ and $\hat u$  are plotted at Figure \ref{fig3}.
The densities of $\hat u$ and $\tilde u$, whose kernel estimates are depicted at Figure \ref{fig4}, appear to be heavy tailed.

\begin{figure}
  \includegraphics[scale=0.6]{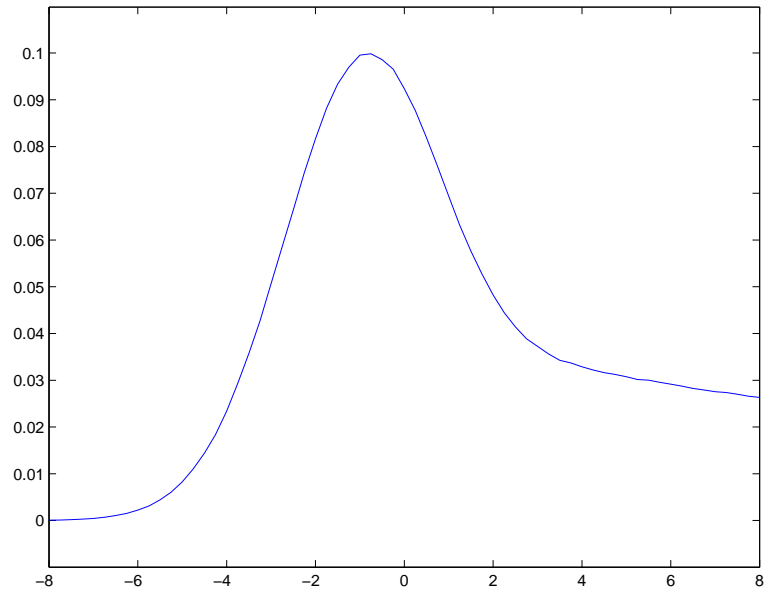}\\
  \caption{\label{fig2} The estimated stationary density of $(X_j)$
  }
\end{figure}

\begin{figure}
  \includegraphics[scale=0.6]{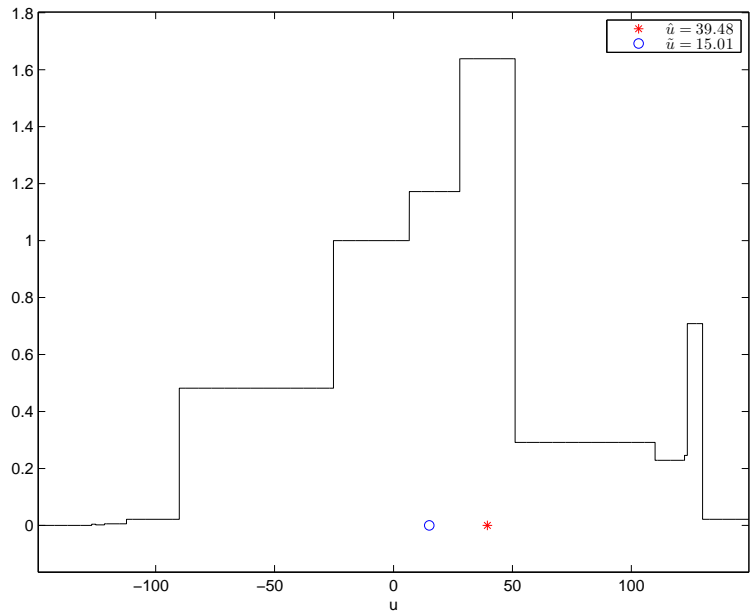}\\
  \caption{\label{fig3} A typical sample path of the limit likelihood ratio process $Z(u)$, $u\in \Real$. The marks are the positions of $\hat u$ and $\tilde u$ }
\end{figure}

\begin{figure}
  \includegraphics[scale=0.6]{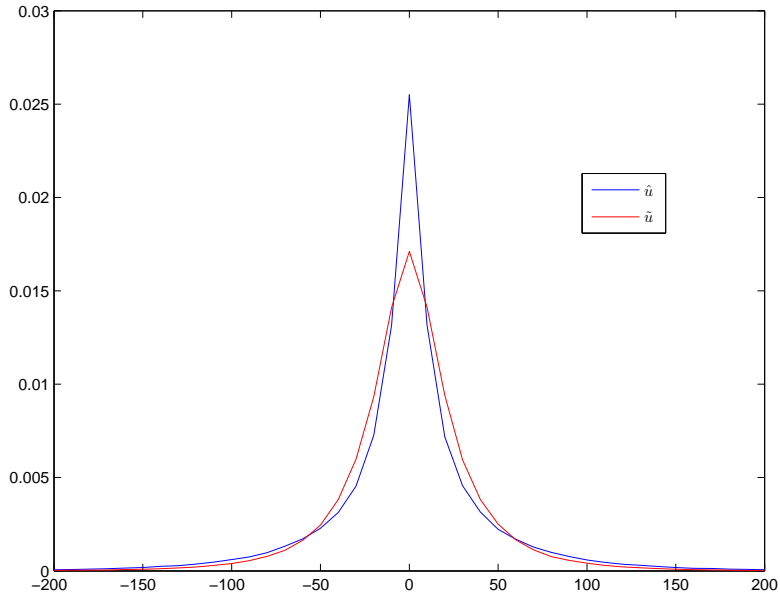}\\
  \caption{\label{fig4} The estimated  probability densities of $\tilde u$ and $\hat u$
  }
\end{figure}

\appendix
\section{Ergodic lemmas used in the proof}\label{app-sec}

The proofs in Section \ref{sec-pf} use the ergodic properties of the processes, summarized in the
following lemmas. Our standing assumption is \eqref{AE}.

\begin{lem}\label{lem-bnd}
For  all integers  $j\ge 0$ and $p\ge 1$,
\begin{equation}
\label{probb}
\P_{\theta_0}\big(X_j \in [\theta_0,\theta_0+v/n]\big) \le \frac{|v|}{n},
\end{equation}
and
\begin{equation}
\label{momb}
\E_{\theta_0} \Big(|X_j|^p + |\xi_j|^p\big|X_0=x, \xi_0=y\Big)\le  r_1^j R_1 \big(|x|^p+|y|^p\big) + R_2,
\end{equation}
with  a positive constant $r_1<1$ and constants $R_1$ and $R_2$, independent of $\theta_0$.
\end{lem}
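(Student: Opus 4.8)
The plan is to establish the two estimates by elementary one-step arguments; neither needs the full ergodicity machinery, only the Gaussianity of the innovations and the contractivity built into \eqref{ar}--\eqref{xi}.

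For \eqref{probb} I would argue as follows. By \eqref{ar}, for $j\ge1$ the conditional law of $X_j$ given the past $(X_{j-1},\xi_{j-1})$ is $N\bigl(f(X_{j-1},\theta_0)+\xi_{j-1},1\bigr)$, since $\eps_j\sim N(0,1)$ is independent of the past; for $j=0$ one simply has $X_0\sim N(0,1)$. In either case the conditional density of $X_j$ is bounded by $1/\sqrt{2\pi}<1$, so
\[
\P_{\theta_0}\bigl(X_j\in[\theta_0,\theta_0+v/n]\bigm|X_{j-1},\xi_{j-1}\bigr)
=\int_{\theta_0}^{\theta_0+v/n}\frac1{\sqrt{2\pi}}\,e^{-\frac12\bigl(x-f(X_{j-1},\theta_0)-\xi_{j-1}\bigr)^2}\,dx\le\frac{|v|}{n},
\]
and \eqref{probb} follows by taking expectations. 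This is the estimate used repeatedly in Section~\ref{sec-pf} to control how many samples fall into the shrinking intervals $D^k_n$, so it is worth recording in this clean form.

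For \eqref{momb} I would build a linear Lyapunov function for the Markov chain $(X_j,\xi_j)$. From \eqref{ar} and \eqref{xi}, using the crude bound $|f(x,\theta_0)|\le\rho|x|$ (valid for every $\theta_0$, since $f$ just multiplies $x$ by $\rho^+$ or $\rho^-$), one has the pathwise inequalities $|X_j|\le\rho|X_{j-1}|+|\xi_{j-1}|+|\eps_j|$ and $|\xi_j|\le|a|\,|\xi_{j-1}|+|\zeta_j|$. Choosing $\kappa>1/(1-|a|)$ and setting $V_j:=|X_j|+\kappa|\xi_j|$, these combine into
\[
V_j\le rV_{j-1}+W_j,\qquad W_j:=|\eps_j|+\kappa|\zeta_j|,\qquad r:=\max\bigl(\rho,\ |a|+\kappa^{-1}\bigr)<1 .
\]
Raising to the power $p$, applying the convexity bound $(s+t)^p\le\lambda^{1-p}s^p+(1-\lambda)^{1-p}t^p$ with $\lambda\in(0,1)$ taken close enough to $1$ that $r_1:=\lambda^{1-p}r^p<1$ (possible since $r<1$ and $\lambda^{1-p}\to1$ as $\lambda\to1$), and then taking conditional expectation using that $W_j$ is independent of the past, I would obtain the geometric-drift inequality $\E_{\theta_0}(V_j^p\mid X_{j-1},\xi_{j-1})\le r_1V_{j-1}^p+C_p$, with $C_p:=(1-\lambda)^{1-p}\E W_1^p$ a finite Gaussian moment. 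Iterating down to $j=0$ gives $\E_{\theta_0}(V_j^p\mid X_0,\xi_0)\le r_1^jV_0^p+C_p/(1-r_1)$, and since $|X_j|^p+|\xi_j|^p\le(1+\kappa^{-p})V_j^p$ and $V_0^p\le2^{p-1}\max(1,\kappa^p)(|X_0|^p+|\xi_0|^p)$, this is exactly \eqref{momb} with explicit $R_1,R_2$.

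There is no serious obstacle; the two points to watch are (i) that every constant produced above depends only on $\rho$, $|a|$, $p$ and the noise law, and in particular not on $\theta_0$ — which is why only the uniform bound $|f(x,\theta_0)|\le\rho|x|$, and no finer information about the threshold, is used — and (ii) that one can simultaneously force $r<1$ and $r_1<1$, which is possible precisely because $\rho<1$ and $|a|<1$, i.e.\ under assumption \eqref{AE}.
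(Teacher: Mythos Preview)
Your proof is correct and follows essentially the same route as the paper: the density bound for \eqref{probb} is identical, and for \eqref{momb} both arguments rest on the pathwise contractions $|X_j|\le\rho|X_{j-1}|+|\xi_{j-1}|+|\eps_j|$, $|\xi_j|\le|a|\,|\xi_{j-1}|+|\zeta_j|$ together with a convexity inequality for $t\mapsto t^p$. The only cosmetic difference is that the paper iterates the two recursions sequentially (first $\xi$, which is autonomous, then $X$), whereas you package them into a single Lyapunov function $V_j=|X_j|+\kappa|\xi_j|$; this buys a slightly cleaner one-line iteration at the cost of introducing the auxiliary parameter $\kappa$, but the content is the same.
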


\begin{proof}
For $j\ge 1$,
$$
\P_{\theta_0}\big(X_j \in [\theta_0,\theta_0+v/n]\big) =\E_{\theta_0} \int_{\theta_0}^{\theta_0+v/n}
\frac 1 {\sqrt{2\pi}} e^{-\frac 1 2 (x-f(X_{j-1},\theta_0)-\xi_{j-1})^2}dx \le  \frac {v} n.
$$
Further, by Jensen's inequality
$$
|\xi_j|^p \le \left(|a|\xi_{j-1}+(1-|a|) \frac 1 {1-|a|}|\zeta_j|\right)^p \le |a| |\xi_{j-1}|^p
+ (1-|a|)^{1-p} |\zeta_j|^p,
$$
and hence
$$
\E_{\theta_0}\big(|\xi_j|^p \big|\xi_0=y\big)\le |a|^j |y|^p + C_1.
$$
Similarly, with $\rho:= |\rho^+|\vee |\rho^-|$
$$
|X_j|^p \le \Big(\rho |X_{j-1}| + |\xi_{j-1}|+|\eps_j|\Big)^p= \\
\rho |X_{j-1}|^p +
\left(\frac {2} {1-\rho}\right)^{p-1}\Big(|\xi_{j-1}|^p+|\eps_j|^p\Big),
$$
and
$$
\E_{\theta_0}\Big(|X_{j}|^p\big|X_0=x,\xi_0=y\Big)\le \rho^j |x|^p + C_2 |a|^j |y|^p + C_3,
$$
which gives \eqref{momb}.
\end{proof}

\begin{lem}\label{lem-erg}
The Markov chain $(X_j, \xi_j)$  has the unique invariant measure under $\P_{\theta_0}$,
with uniformly bounded probability density $p(x,y;\theta_0)$ satisfying
\begin{multline}
\label{sprobb}
\widetilde \P_{\theta_0}\big(X_j \in [\theta_0,\theta_0+v/n]\big)= \\
\int_{\theta_0}^{\theta_0+v/n}\int_{\Real} p(x,y;\theta_0)dxdy= \frac{v}{n} \int_\Real p(\theta_0,y;\theta_0)dy+ O(n^{-2}),
\end{multline}
where $\widetilde \P_{\theta_0}$ is the corresponding  stationary probability on $(\Omega,\F)$.

Moreover, the chain is geometrically ergodic, i.e.
there exist positive constants $C$ and  $r<1$, such that for a measurable function $|h|\le 1$ and $m\ge k$
\begin{equation}
\label{ge1}
\Big|\E_{\theta_0}\big(h(X_m,\xi_m)|X_k=x,\xi_k=y\big)-\widetilde \E_{\theta_0}h(X_k,\xi_k)\Big|\le C r^{m-k} (|x|+|y|), \quad x,y\in \Real,
\end{equation}
and consequently,  for an $\F_{m,\infty}$-measurable random variable $|H|\le 1$
\begin{equation}
\label{ge}
\E_{\theta_0}\Big|\E_{\theta_0}\big(H|\F_k\big)-\widetilde \E_{\theta_0} H\Big|\le C r^{m-k}.
\end{equation}
Finally, $(X_j,\xi_j)$ is geometrically mixing, i.e. for measurable functions $|g|\le 1$, $|h|\le 1$
\begin{equation}
\label{mix}
\Big|\E_{\theta_0}g(X_k,\xi_k)h(X_{k+m},\xi_{k+m})-
\E_{\theta_0}g(X_k,\xi_k)\E_{\theta_0} h(X_{k+m},\xi_{k+m}) \Big|\le C r^m(1+r^k).
\end{equation}
In particular, \eqref{ge} and  \eqref{mix} hold with the stationary expectation $\widetilde \E_{\theta_0}$.
\end{lem}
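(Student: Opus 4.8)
The plan is to establish the classical drift-plus-minorization conditions for the $\Real^2$-valued Markov chain $(X_j,\xi_j)$ and then read the assertions off the standard $V$-uniform ergodic theorem for Harris chains (cf.\ Meyn and Tweedie). For the drift, take the Lyapunov function $V(x,y):=1+|x|+K|y|$ with $K$ large enough that $1/K+|a|<1$. Using $X_1=f(X_0,\theta_0)+\xi_0+\eps_1$, $\xi_1=a\xi_0+\zeta_1$, the bound $|f(x,\theta_0)|\le\rho|x|$ with $\rho:=|\rho^+|\vee|\rho^-|<1$, and $|a|<1$, a short computation gives $\E_{\theta_0}\big(V(X_1,\xi_1)|X_0=x,\xi_0=y\big)\le\lambda V(x,y)+b$ for some $\lambda<1$, $b<\infty$, which --- since $\rho$ and $|a|$ do not depend on the parameter --- may be chosen uniformly over $\theta_0$ in the compact $K\subset\Theta$. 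The sublevel sets $\{V\le R\}$ are compact.

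For the minorization, observe that the one-step transition kernel has the everywhere positive, jointly continuous density $k_{\theta_0}\big((x,y),(x',y')\big)=(2\pi)^{-1}\exp\!\big(-\tfrac12(x'-f(x,\theta_0)-y)^2-\tfrac12(y'-ay)^2\big)$; because $|f(x,\theta_0)|\le\rho|x|$, on $\{|x|\vee|y|\le M\}\times\{|x'|\vee|y'|\le M'\}$ this is bounded below by a positive constant $\kappa(M,M')$ independent of $\theta_0$. Hence every compact set is small with a $\theta_0$-uniform minorization, the chain is Lebesgue-irreducible and strongly aperiodic, and the drift-plus-minorization theorem yields a unique invariant probability measure for $(X_j,\xi_j)$ --- and hence a well-defined stationary law $\widetilde\P_{\theta_0}$ on $(\Omega,\F)$ --- together with the bound $\sup_{|h|\le1}\big|\E_{\theta_0}\big(h(X_m,\xi_m)|X_k=x,\xi_k=y\big)-\widetilde\E_{\theta_0}h\big|\le Cr^{m-k}V(x,y)$ with $C$, $r<1$ uniform over $\theta_0\in K$. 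Since $V(x,y)\le C'(1+|x|+|y|)$, this is \eqref{ge1} (with $1+|x|+|y|$ in place of $|x|+|y|$, which is all that the proof of Theorem \ref{thm} uses). Boundedness of the invariant density is then immediate from invariance: $p(x',y';\theta_0)=\int p(x,y;\theta_0)\,k_{\theta_0}\big((x,y),(x',y')\big)\,dx\,dy\le(2\pi)^{-1}$.

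Assertions \eqref{ge} and \eqref{mix} follow from \eqref{ge1} by conditioning: for $\F_{m,\infty}$-measurable $|H|\le1$ the Markov property and independence of $(\zeta_i,\eps_i)_{i>m}$ from $\F_k$ give $\E_{\theta_0}(H|\F_k)=(P^{m-k}G)(X_k,\xi_k)$ for some $|G|\le1$, whence $\E_{\theta_0}\big|\E_{\theta_0}(H|\F_k)-\widetilde\E_{\theta_0}H\big|\le Cr^{m-k}\E_{\theta_0}(1+|X_k|+|\xi_k|)\le C''r^{m-k}$ by the uniform moment bound \eqref{momb}; and \eqref{mix} is obtained the same way after writing its left side as $\big|\cov\big(g(X_k,\xi_k),(P^mh-\widetilde\E_{\theta_0}h)(X_k,\xi_k)\big)\big|$ and using $|P^mh(x,y)-\widetilde\E_{\theta_0}h|\le Cr^m(1+|x|+|y|)$. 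For the sharpened expansion \eqref{sprobb} I would show that the $X$-marginal $g(x):=\int_\Real p(x,y;\theta_0)\,dy$ of the invariant density is continuously differentiable with derivative bounded uniformly over $\theta_0\in K$, by differentiating the smoothing identity $p(x',y';\theta_0)=\int p(x,y;\theta_0)\,k_{\theta_0}\big((x,y),(x',y')\big)\,dx\,dy$ under the integral sign --- legitimate because $|\partial_{x'}k_{\theta_0}|=|x'-f(x,\theta_0)-y|\,k_{\theta_0}$ is bounded and integrable in $(x,y,y')$ uniformly in $(x',\theta_0)$; then $\widetilde\P_{\theta_0}(X_j\in[\theta_0,\theta_0+v/n])=\int_{\theta_0}^{\theta_0+v/n}g(x)\,dx=\tfrac vn\,g(\theta_0)+O(n^{-2})$ by the fundamental theorem of calculus, with $g(\theta_0)=\int_\Real p(\theta_0,y;\theta_0)\,dy$ and a $\theta_0$-uniform remainder.

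I expect the argument to be conceptually routine, the real work being bookkeeping: arranging that the drift constants, the small-set constant $\kappa$, and hence $C$ and $r$, are genuinely uniform over the compact $K$ --- which is harmless here because the discontinuity locus $\{x=\theta_0\}$ moves with $\theta_0$ but $|f(x,\theta_0)|\le\rho|x|$ holds for every $\theta_0$ --- and carrying out the differentiation-under-the-integral estimate above with a uniform remainder, so that \eqref{sprobb} comes with the $O(n^{-2})$ error rather than merely $o(1/n)$.
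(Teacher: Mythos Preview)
Your proposal is correct and follows essentially the same route as the paper: both verify the Meyn--Tweedie drift-plus-minorization conditions for $(X_j,\xi_j)$ (positive transition density gives irreducibility and small compact sets; a linear Lyapunov function gives the geometric drift) and then read off existence, uniqueness, and $V$-uniform ergodicity of the invariant measure, deducing \eqref{ge}, \eqref{mix}, and \eqref{sprobb} as corollaries. If anything your write-up is more careful than the paper's in two places --- your weighted choice $V(x,y)=1+|x|+K|y|$ genuinely absorbs the cross-term $|\xi_{j-1}|$ appearing in $|X_j|$, whereas the paper's unweighted $V(x,y)=|x|+|y|$ does not obviously do so, and you spell out the differentiation-under-the-integral argument for the $C^1$ regularity of the invariant density that the paper merely asserts.
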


\begin{proof}
The transition kernel of the process  $(X_j,\xi_j)$ has a positive density with respect to the
Lebesgue measure:
$$
(P\one{A})(x,y) := \int_A \frac 1 {2\pi} \exp\left(-\frac 1 2 \big(u-f(x,\theta_0)-y\big)^2-\frac 1 2 (v-ay)^2\right)dudv
$$
and hence in the terminology of \cite{MT09}, it is $\psi$-irreducible and aperiodic. Further, a ball $B_R$ of radius $R>0$ around the
origin is a small set with respect to e.g. the measure
$$
\nu(dx,dy):=e^{ -\frac 1 2 (u^2+ v^2)-(1+R)(|u| +|v|) }dudv
$$
and $V(x,y)=|x|+|y|$ satisfies the drift condition
$$
(PV)(x,y) -V(x,y)\le -\frac  1 2 (1-\rho \wedge |a|) V(x,y)+ 2\one{(x,y)\in B_R}, \quad (x,y)\in \Real^2,
$$
for sufficiently large $R$. By Theorem 15.0.1 \cite{MT09}, it follows that there exists a unique invariant probability measure
$\pi$ and for any measurable  $h(x,y)\le V(x,y)$,
$$
\Big|P^n h - \int hd\pi\Big|\le Cr^n V(x,y),
$$
with positive constants $C$ and $r<1$, i.e.   \eqref{ge1} holds. Since $\widetilde \E_{\theta_0} H = \widetilde \E_{\theta_0} h(X_m,\xi_m)$
and $\E_{\theta_0} (H|\F_k) = \E_{\theta_0} (h(X_m,\xi_m)|\F_k)$ with $h(x,y):= \E_{\theta_0}(H|X_m=x,\xi_m=y)$,
the claim \eqref{ge} follows from \eqref{ge1} and \eqref{momb}.
Since the transition kernel $P$ has a bounded continuously differentiable density with respect to the Lebesgue measure,
so does the invariant measure $\pi$ and \eqref{sprobb} follows.
The mixing inequality \eqref{mix} follows from
Theorem 16.1.5 in \cite{MT09}.
\end{proof}

The theory, used in the proof of the previous lemma, does not directly apply to the Markov chain
$(X_j,\xi_j,\widehat\Xi_j)$ (see \eqref{Xi} for the definition of  $\widehat \Xi_j$),
since  it is generated by a $(3+d)$-dimensional recursion, driven by two dimensional noise. This typically excludes
$\psi$-irreducibility. Fortunately, for our purposes the following weaker properties
are sufficient:

\begin{lem}\label{lem-lip}
The Markov process $(X_j,\xi_j,\widehat\Xi_j)$ has the unique invariant measure. Let $\widetilde \P_{\theta_0}$ denote the
corresponding stationary probability\footnote{by uniqueness, the stationary probabilities $\widetilde \P_{\theta_0}$, introduced in Lemmas
\ref{lem-erg} and \ref{lem-lip}, coincide}.
Then for a measurable function $h(x,y,z)$, satisfying $|h(x,y,z)|<1$ and the  Lipschitz condition
$$
|h(x,y,z)-h(x,y,z')|\le L\big(1+|x|+|y|+\|z\|+\|z'\|\big) \|z-z'\|, \quad x,y\in \Real,\; z,z'\in \Real^{d+1},
$$
with a positive constant  $L$,
\begin{equation}
\label{B}
\E_{\theta_0}\Big|\E_{\theta_0} \big(h(X_m,\xi_m, \widehat\Xi_m)\big|\F_\ell\big)-\widetilde \E_{\theta_0} h(X_0,\xi_0, \widehat\Xi_0)\Big|\le
C  q^{m-\ell}
\end{equation}
for some positive constants $C$ and $q<1$ and all integers $m\ge \ell\ge 0$.
\end{lem}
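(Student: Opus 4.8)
The idea is to bypass the enlarged chain $(X_j,\xi_j,\widehat\Xi_j)$ (which is not $\psi$-irreducible, so Lemma \ref{lem-erg}'s Meyn--Tweedie argument does not apply) and to reduce everything to the genuinely ergodic pair $(X_j,\xi_j)$. The point is the linear structure displayed in \eqref{expl}: each coordinate $\widehat\Xi^k_j$ is an affine functional of $(X_0,\dots,X_j)$ in which the initial value $z_k$ enters only through the geometrically small factor $b^jz_k$. Hence, up to an error of order $|b|^t$, $\widehat\Xi_j$ is a function of the last $t$ observations $X_{j-t},\dots,X_j$ only, and the Lipschitz hypothesis on $h$ lets us pass that error through $h$. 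Throughout I use the uniform moment bound \eqref{momb}, which via \eqref{expl} also gives $\sup_j\bigl(\E_{\theta_0}\|\widehat\Xi_j\|^2+\widetilde\E_{\theta_0}\|\widehat\Xi_j\|^2\bigr)\le C$.

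\emph{Existence and uniqueness of the invariant measure.} I would first extend the stationary version of $(X_j,\xi_j)$ to a two-sided process $(X_j,\xi_j)_{j\in\Z}$ under $\widetilde\P_{\theta_0}$ and set $\widehat\Xi^k_j:=c\sum_{i\le j}\bigl(X_i-f(X_{i-1},\theta_0+u_k/n)\bigr)b^{j-i}$; the series converges in $L^2$ since $\widetilde\E_{\theta_0}\bigl(X_i-f(X_{i-1},\cdot)\bigr)^2$ is bounded uniformly in $i$ and $\sum|b|^{j-i}<\infty$. One checks directly that $(X_j,\xi_j,\widehat\Xi_j)_{j\in\Z}$ is stationary and solves the defining recursions \eqref{ar}, \eqref{xi}, \eqref{KF1c}, so its one-dimensional marginal is invariant for the triple's Markov kernel. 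For uniqueness, under any invariant law one has $\widehat\Xi^k_n=b^n\widehat\Xi^k_0+c\sum_{i=1}^n\bigl(X_i-f(X_{i-1},\cdot)\bigr)b^{n-i}$; stationarity forces $\widetilde\E\|\widehat\Xi_0\|<\infty$ (from $\widetilde\E\|\widehat\Xi_0\|=\widetilde\E\|\widehat\Xi_n\|\le|b|^n\widetilde\E\|\widehat\Xi_0\|+C$), so $b^n\widehat\Xi_0\to0$ a.s.\ along a subsequence and $\widehat\Xi_0$ coincides a.s.\ with the above two-sided functional of $(X_i)_{i\le 0}$; thus the invariant law is that of the process just constructed.

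\emph{The bound \eqref{B}.} Fix $m\ge\ell\ge0$, put $t:=\lceil(m-\ell)/2\rceil$ (so $\ell\le m-t$ and $2t\ge m-\ell$), and let $\widehat\Xi^{[t]}_m$ be the value at time $m$ of the filter recursion restarted from $0$ at time $m-t$; then $\widehat\Xi_m-\widehat\Xi^{[t]}_m=b^{t}\widehat\Xi_{m-t}$ by \eqref{expl}, while $\widehat\Xi^{[t]}_m=\phi\bigl(X_{m-t},\dots,X_m,\xi_m\bigr)$ for a deterministic map with $|h(X_m,\xi_m,\widehat\Xi^{[t]}_m)|\le1$. I would then chain three comparisons. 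First, by the Lipschitz property of $h$, Cauchy--Schwarz and the uniform $L^2$ bounds,
$$
\E_{\theta_0}\bigl|h(X_m,\xi_m,\widehat\Xi_m)-h(X_m,\xi_m,\widehat\Xi^{[t]}_m)\bigr|\le C|b|^{t},
$$
and the same estimate under $\widetilde\P_{\theta_0}$ comparing $h(X_m,\xi_m,\widehat\Xi^{[t]}_m)$ with $h(X_m,\xi_m,\widetilde{\widehat\Xi}_m)$, whose stationary expectation equals the right-hand side of \eqref{B}. Second, writing $H'':=h(X_m,\xi_m,\widehat\Xi^{[t]}_m)$, the Markov property gives $\E_{\theta_0}(H''|\F_{m-t})=\chi(X_{m-t},\xi_{m-t})$ with $|\chi|\le1$, hence $\E_{\theta_0}(H''|\F_\ell)=\E_{\theta_0}\bigl(\chi(X_{m-t},\xi_{m-t})\,\big|\,X_\ell,\xi_\ell\bigr)$ since $m-t\ge\ell$; applying \eqref{ge1} to $\chi$ over the window of length $m-t-\ell$, taking $\E_{\theta_0}$ and using $\E_{\theta_0}(|X_\ell|+|\xi_\ell|)\le C$, yields $\E_{\theta_0}\bigl|\E_{\theta_0}(H''|\F_\ell)-\widetilde\E_{\theta_0}\chi(X_\ell,\xi_\ell)\bigr|\le C r^{\,m-t-\ell}$, and $\widetilde\E_{\theta_0}\chi(X_\ell,\xi_\ell)=\widetilde\E_{\theta_0}\chi(X_{m-t},\xi_{m-t})=\widetilde\E_{\theta_0}H''$ by stationarity and the common transition kernel. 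Adding the three contributions and using $t\asymp(m-\ell)/2$ gives \eqref{B} with $q:=(r\vee|b|)^{1/2}<1$; the case $m=\ell$ is trivial.

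\emph{Main obstacle.} The delicate point is precisely the lack of irreducibility of the enlarged chain, which is circumvented by the filter's exponential loss of memory; the technical heart is then the balancing act in the last step, where the truncation error $|b|^{t}$ and the $(X,\xi)$-ergodicity error $r^{\,m-t-\ell}$ must both be forced down to $q^{\,m-\ell}$ by splitting the gap $m-\ell$ in half, together with the uniform (in $j$ and in $\theta_0\in K$) second-moment control of $\widehat\Xi_j$ needed to push the Lipschitz errors through the expectation.
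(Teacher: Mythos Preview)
Your proposal is correct and follows essentially the same route as the paper: both build the invariant law from the two-sided stationary extension of $(X_j,\xi_j)$ and the infinite-past series for $\widehat\Xi$, and both prove \eqref{B} by splitting the gap $m-\ell$ in half, using the filter's geometric forgetting (your $\widehat\Xi_m-\widehat\Xi^{[t]}_m=b^{t}\widehat\Xi_{m-t}$ is the paper's $b^{\frac12(m-\ell)}J_1$, and your $\widehat\Xi^{[t]}_m$ is the paper's $J_2$) to reduce to a bounded $\F_{m-t,\infty}$-measurable function of $(X,\xi)$ alone, and then invoking the geometric ergodicity of $(X_j,\xi_j)$ from Lemma~\ref{lem-erg} over the remaining half to arrive at $q=(r\vee|b|)^{1/2}$. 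The only cosmetic difference is in the uniqueness step, where the paper couples two copies started from the same $(X_0,\xi_0)$ and tests against uniformly continuous functions, while you identify $\widehat\Xi_0$ directly with the two-sided series; both arguments are valid.
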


\begin{proof}
Under the stationary measure $\widetilde \P_{\theta_0}$ from Lemma \ref{lem-erg}, we can extend the definition of
$(X_j,\xi_j)$ to the negative integers and define
\begin{equation}
\label{Xi0}
\widehat \Xi^{k}_0 := c\sum_{i=-\infty}^0 b^i \big(X_i-f(X_{i-1},\theta_0+u_k/n)\big), \quad k = 0,...,d
\end{equation}
where  $b:=\frac{a}{1+\gamma}$ and $c:=\frac{a\gamma}{1+\gamma}$. The distribution of $(X_0,\xi_0,\widehat \Xi_0)$ is invariant.
To establish uniqueness, let $\mu$ and $\mu'$ be two invariant measures and note that by Lemma \ref{lem-erg} their $(X,\xi)$ marginals coincide.
Hence
$$
\mu(dx,dy,dz) = \nu(dx,dy)\mu(x,y;dz),\quad \mu'(dx,dy,dz) = \nu(dx,dy)\mu'(x,y;dz)
$$
where $\nu$ is the invariant measure of the process $(X_j,\xi_j)$ and $\mu(x,y;dz)$ and $\mu'(x,y;dz)$ are corresponding regular
conditional probabilities.  Let $(X_j,\xi_j,\widehat \Xi_j)$ and
$(X_j,\xi_j,\widehat \Xi'_j)$ be the solutions of the recursions \eqref{ar}, \eqref{xi} and  \eqref{KF1c} with $u:=u_k$, $k=0,...,d$
subject to the initial conditions $(X_0,\xi_0,\widehat\Xi_0)$ and $(X_0,\xi_0,\widehat\Xi'_0)$, where $(X_0,\xi_0)$ is sampled from $\nu$ and $\widehat \Xi_0$ and $\widehat \Xi'_0$
are sampled from $\mu(X_0,\xi_0;dz)$ and $\mu'(X_0,\xi_0;dz)$.
Note that $\widehat \Xi'_j-\widehat \Xi_j= b^j(\widehat \Xi'_0-\widehat \Xi_0)$ and
hence for any uniformly continuous function $g$
$$
\left|\int g d\mu -\int gd\mu'\right|\le \E_{\theta_0}\big|g(X_j,\xi_j,\widehat\Xi_j)-g(X_j,\xi_j,\widehat\Xi'_j)\big|
\xrightarrow{j\to\infty}0.
$$
Since uniformly continuous functions form a measure defining class, the uniqueness follows.

To derive the bound \eqref{B}, note that:
\begin{multline*}
\widehat \Xi^k_m = \widehat \Xi^k_\ell b^{m-\ell} + c\sum_{j=\ell+1}^{m} \big(X_j-f(X_{j-1}, \theta_0+u_k/n)\big) b^{m-j} =\\
b^{\frac 1 2 (m-\ell)}\bigg(\widehat \Xi^k_\ell b^{\frac 1 2 (m-\ell)} +  c\sum_{j=\ell+1}^{\frac 1 2 (m+\ell)} \big(X_j-f(X_{j-1}, \theta_0+u_k/n)\big) b^{\frac 1 2 (m+\ell)-j}\bigg) +\\
+c\sum_{j=\frac 1 2 (m+\ell)+1}^{m} \big(X_j-f(X_{j-1}, \theta_0+u_k/n)\big) b^{m-j}=: b^{\frac 1 2 (m-\ell)}J^k_1+J^k_2, \quad \ell \le m.
\end{multline*}
Using the bound \eqref{momb}, we get
\begin{equation}\label{J1}
\E_{\theta_0} \big|J^k_1\big|^{2} \le
2\E_{\theta_0}|\widehat \Xi^k_\ell|^{2} +
C_1
\sum_{j=\ell+1}^{\frac 1 2 (m+\ell)} \E_{\theta_0}\Big(|X_j|+|X_{j-1}|\Big)^{2} |b|^{\frac 1 2 (m+\ell)-j}\le  C_2.
\end{equation}
By the triangle inequality
\begin{equation}\label{tre}
\begin{aligned}
 \E_{\theta_0}&\Big|\E_{\theta_0} \big(h(X_m,\xi_m, \widehat \Xi_m)\big|\F_\ell\big)-\widetilde \E_{\theta_0} h(X_0,\xi_0, \widehat \Xi_0)\Big| = \\
 \E_{\theta_0}&\Big|\E_{\theta_0} \big(h(X_m,\xi_m, \widehat \Xi_m)\big|\F_\ell\big)-\widetilde \E_{\theta_0} h(X_m,\xi_m, \widehat \Xi_m)\Big|\le \\
\E_{\theta_0}&\Big|\E_{\theta_0} \big(h(X_m,\xi_m,  b^{\frac 1 2 (m-\ell)}J_1+J_2)\big|\F_\ell\big)
-\E_{\theta_0} \big(h(X_m,\xi_m,  J_2)\big|\F_\ell\big)\Big| +\\
\E_{\theta_0}&\Big|
\E_{\theta_0} \big(h(X_m,\xi_m,  J_2)\big|\F_\ell\big)
-\widetilde \E_{\theta_0} h(X_m,\xi_m,  J_2)
\Big|+\\
&\Big|
\widetilde \E_{\theta_0} h(X_m,\xi_m,  J_2)
-\widetilde \E_{\theta_0} h(X_m,\xi_m,  b^{\frac 1 2 (m-\ell)}J_1+J_2)\Big|.
\end{aligned}
\end{equation}

Note that $h(X_m,\xi_m,  J_2)$ is measurable with respect to $\F_{\frac 1 2 (m+\ell),\infty}$ and  by \eqref{ge}
$$
\E_{\theta_0}\Big|
\E_{\theta_0} \big(h(X_m,\xi_m,  J_2)\big|\F_\ell\big)
-\widetilde \E_{\theta_0} h(X_m,\xi_m,  J_2)
\Big|\le C r^{\frac 1 2 (m-\ell)}.
$$
By the Lipschitz property of $h$ and \eqref{J1}, we have
\begin{align*}
&\E_{\theta_0}\Big|\E_{\theta_0} \big(h(X_m,\xi_m,  b^{\frac 1 2 (m-\ell)}J_1+J_2)\big|\F_\ell\big)
-\E_{\theta_0} \big(h(X_m,\xi_m,  J_2)\big|\F_\ell\big)\Big|\le \\
&\E_{\theta_0}\Big|h(X_m,\xi_m,  b^{\frac 1 2 (m-\ell)}J_1+J_2)
-h(X_m,\xi_m,  J_2)\Big|\le \\
& b^{\frac 1 2 (m-\ell)}\E_{\theta_0}L\Big(1+|X_m|+|\xi_m|+\|\widehat \Xi_m\|+\|J_2\|\Big)\|J_1\|\le \\
& b^{\frac 1 2 (m-\ell)}L\bigg(\E_{\theta_0}\Big(1+|X_m|+|\xi_m|+\|\widehat \Xi_m\|+\|J_2\|\Big)^2\bigg)^{1/2}\Big(\E_{\theta_0}\|J_1\|^2\Big)^{1/2}
\le C_3 b^{\frac 1 2 (m-\ell)}.
\end{align*}
Similar bound holds for the last term in \eqref{tre} and the  claim follows with $q:=\sqrt{|b|\vee r}$.
\end{proof}

\section*{Acknowledgement} P.Chigansky is supported by ISF grant 314/09



\end{document}